\setlist[1]{itemsep=5pt}
\numberwithin{equation}{section}
\newcommand{\field}[1]{\mathbb{#1}}
\newcommand{\Z}{\field{Z}}
\newcommand{\R}{\field{R}}
\newcommand{\C}{\field{C}}
\newcommand{\N}{\field{N}}
 \def\cC{\mathscr{C}}
\def\cL{\mathscr{L}}
\def\cO{\mathscr{O}}
\def\mC{\mathcal{C}}
\def\mA{\mathcal{A}}
\def\mB{\mathcal{B}}
\def\mO{\mathcal{O}}
\newcommand{\rcal}{\mathcal{R}}
\def\bE{{\boldsymbol E}}
\def\Re{{\rm Re}}
\def\Im{{\rm Im}}
\newcommand{\be}{\begin{eqnarray}}
\newcommand{\ee}{\end{eqnarray}}
\newcommand{\ov}{\overline}
\newcommand{\var}{\varepsilon}
\newcommand{\half}{{\frac{1}{2}}}
\DeclareMathOperator{\End}{End}
\DeclareMathOperator{\rank}{rk}
\DeclareMathOperator{\Id}{Id}
\DeclareMathOperator{\tr}{Tr}
\newcommand{\om}{\omega}
\newcommand{\db}{\overline\partial}
\newtheorem{thm}{Theorem}[section]
\newtheorem{lemma}[thm]{Lemma}
\newtheorem{prop}[thm]{Proposition}
\newtheorem{cor}[thm]{Corollary}
\theoremstyle{definition}
\newtheorem{rem}[thm]{Remark}
\theoremstyle{definition}
\newcommand{\comment}[1]{}
\begin{document}

\title
{Scaling asymptotics of heat kernels of
line bundles}

\author{Xiaonan Ma}
\address{Institut Universitaire de France
\&Universit\'e Paris Diderot - Paris 7,
UFR de Math\'ematiques, Case 7012,
75205 Paris Cedex 13, France}
\email{xiaonan.ma@imj-prg.fr}
\thanks{X.\ M.\ partially supported by
Institut Universitaire de France and
funded through the Institutional Strategy of
the University of Cologne within the German Excellence Initiative}
\author{George Marinescu}
\address{Universit{\"a}t zu K{\"o}ln,
Mathematisches Institut, Weyertal 86-90, 50931 K{\"o}ln, Germany\\
    \& Institute of Mathematics `Simion Stoilow', Romanian Academy,
Bucharest, Romania}
\email{gmarines@math.uni-koeln.de}
\thanks{G.\ M.\ partially supported by DFG funded
projects SFB/TR 12, MA 2469/2-1 and ENS Paris}

\author{Steve Zelditch}
\address{Department of Mathematics, Northwestern  University,
Evanston, IL 60208, USA}

\email{zelditch@math.northwestern.edu}

\thanks{S.\ Z.\ partially supported by NSF grant DMS-1206527.}

\dedicatory{Dedicated to Professor Duong H. Phong on the
occasion of his 60th birthday}

\begin{abstract}
We consider a general Hermitian
holomorphic line bundle $L$ on a compact complex manifold $M$
and let ${\Box}^q_p$ be the Kodaira Laplacian on $(0,q)$ forms
with values in  $L^p$. We study the scaling asymptotics
of the heat kernel $\exp(-u {\Box}^q_p/p)(x,y)$.

The main result is a complete
asymptotic expansion  for the semi-classically scaled heat kernel
$\exp(-u{\Box}^q_p/p)(x,x)$ along the diagonal.
It is a generalization of the Bergman/Szeg\"o kernel asymptotics
in the case of a positive line bundle, but no positivity is assumed.
We give two proofs, one based on the Hadamard parametrix for
the heat kernel on a principal bundle and the second based
on the analytic localization of the Dirac-Dolbeault operator.
%
\end{abstract}

\maketitle
\tableofcontents

\section{Introduction}
Let  $(M,J)$  be a complex manifold with complex structure $J$,
and complex dimension $n$.
Let $L$ and $E$ be two holomorphic vector bundles on
$M$ such that $\rank(L)=1$; the bundle $E$ plays the role of  an
auxiliary twisting bundle.
We fix Hermitian metrics $h^L$, $h^E$ on $L$, $E$.
Let $L^p$ denote the $p$th tensor power of $L$.
The purpose of this article is to prove  scaling
asymptotics of various heat kernels on  $L^p \otimes E$
as $p \to \infty$. We present the scaling asymptotics
from two points of view. The first one (Theorem \ref{ASYM})
gives scaling asymptotics of the Kodaira heat kernels
and is based on the analytic localization technique
of Bismut-Lebeau \cite{BL},  adaptating the arguments
from \cite[\S 1.6, \S 4.2]{MM07}.
The second (Theorem \ref{ASYMX}) gives scaling asymptotics
of the heat kernels associated to the Bochner Laplacian,
and is  an adaptation of the Szeg\"o
kernel asymptotics of \cite{Z}. It   is based on  lifting  sections
of $L^p$ to equivariant functions  on  the
associated  principal $S^1$ bundle
$X_h \to M$, and obtaining  scaling asymptotics of heat kernels
from  Fourier analysis of characters and stationary
phase asymptotics. Either method can be applied to any of the
relevant heat kernels and it seems
to us of some interest to compare the methods.
We refer to \cite{Bis.V, Z,MM07,Z2} for background from both
points of view  of analysis on higher powers of line bundles.

To state our results, we need to introduce some notation.
Let $\nabla ^E$, $\nabla ^L$ be the holomorphic Hermitian
connections on $(E,h^E)$, $(L,h^L)$.
Let $R^L$, $R^E$
be the curvatures of $\nabla ^L$, $\nabla ^E$.
Let $g^{TM}$ be a $J$-invariant Riemannian metric on $M$,
i.\,e., $g^{TM}(Ju,Jv)= g^{TM}(u,v)$ for all $x\in M$ and
$u,v\in T_xM$. Set
\begin{equation}\label{lm4.1}
\omega : = \frac{\sqrt{-1}}{2\pi}R^L, \qquad
\Theta (\cdot, \cdot):= g^{TM}(J\cdot, \cdot).
\end{equation}
Then $\omega, \Theta$ are real $(1,1)$-forms on $M$,
and $\omega$ is the Chern-Weil representative of
the first Chern class \index{Chern class!first} $c_1(L)$ of $L$.
The Riemannian volume form  $dv_{M}$ of $(TM, g^{TM})$
is $\Theta^n/n !$.
We will identify the 2-form $R^L$ with the Hermitian matrix
$\dot{R}^L \in \End(T^{(1,0)}M)$ defined by
\begin{equation}\label{lm4.2}
\big\langle \dot{R}^LW,\ov{Y}\,\big\rangle=R^L (W,\ov{Y})\,,
\quad W,Y\in T^{(1,0)}M.
\end{equation}
The curvature $R^L$ acts as a derivation $\om_d\in \End(\Lambda (T^{*(0,1)}M))$  
on $\Lambda (T^{*(0,1)}M)$.
Namely, let $\{w_j\}_{j=1}^n$ be a local orthonormal frame of $T^{(1,0)}M$
with dual frame $\{w^j\}_{j=1}^n$. Set
\begin{equation}  \label{lm4.3}
\om_d=-\sum_{l,m} R^L (w_l,\overline{w}_m)\,
\overline{w}^m\wedge
\,i_{\overline{w}_l}\,,
\qquad \tau(x)=\sum_j R^L (w_j,\overline{w}_j)\,.
\end{equation}
Consider the Dolbeault-Dirac operator
\begin{align}\label{lm2.1}
D_p = \sqrt{2}\left(\overline{\partial}^{L^p\otimes E}
+ \,\overline{\partial}^{L^p\otimes E,*}\right)\,,
\end{align}
and the Kodaira Laplacian
\begin{align}\label{lm2.2}
\square_p=\tfrac12 D^2_p= \overline{\partial}^{L^p\otimes E}\,
\overline{\partial}^{L^p\otimes E,*}
+\,\overline{\partial}^{L^p\otimes E,*}\,
\overline{\partial}^{L^p\otimes E}\,.
\end{align}
For $p\in \N$, we denote by\index{$E_p$}
\begin{align}\label{lm4.7}
E_p^j:=\Lambda^j(T^{*(0,1)}M)\otimes L^p \otimes E,
\quad E_p=\oplus_j E_p^j\,,
\end{align}
and let $h_p$ the induced Hermitian metric on $E_p$\,.

The operator $D_p^2= 2\square_p$ is a second order
elliptic differential operator with principal symbol
$\sigma(D_p^2) (\xi)= |\xi|^2\Id_{E_p}$ for $\xi\in T_{x} ^*M$, $x\in M$.
The heat operator
$\exp(-uD_p^2)$ is well defined for $u>0$.
Let $\exp(-uD_p^2)(x,x')$, where $x,x'\in M$, be its smooth
kernel with respect
to the Riemannian volume form $dv_{M} (x')$. Then
\begin{align}\label{lm4.9}
\exp(-uD_p^2)(x,x')\in (E_p)_x
\otimes (E_p)^*_{x'}\,,
\end{align}
especially
\begin{align}\label{lm4.10}
\exp(-uD_p^2)(x,x) \in
\End(E_p)_x
= \End(\Lambda (T^{*(0,1)}M) \otimes E)_x,
\end{align}
where we use the canonical identification $\End(L^p)=\C$
for any line bundle $L$ on $M$.
Note that $D_p^2$ preserves the $\Z$-grading of
the Dolbeault complex
$\Omega^{0,\bullet}(M, L^p\otimes E)$, so
\begin{align}\label{lm4.91}
\exp(-uD_p^2|_{\Omega^{0,j}})(x,x')
=\sum_{k=1}^\infty e^{-u\lambda^j_{k,p}}
\varphi^j_{k,p}(x)\otimes\varphi^j_{k,p}(x')^*\in (E^j_p)_x
\otimes (E^j_p)^*_{x'}\,,
\end{align}
where $\{\lambda^j_{k,p}: k\geqslant1\}$ is the spectrum of
$D_p^2|_{\Omega^{0,j}}$ and $\{\varphi_{k,p}^j: k\geqslant1\}$
is an orthonormal basis of $L^2(M,E^j_p)$ consisting of
eigensections of $D_p^2|_{\Omega^{0,j}}$
with $D_p^2\varphi^j_{k,p}=\lambda^j_{k,p}\varphi^j_{k,p}$\,,
cf.\ \cite[(D.1.7)]{MM07}.
Thus
\[
\exp(-uD_p^2)(x,x) \in
\bigoplus_j \End(\Lambda^j (T^{*(0,1)}M) \otimes E)_x\,.
\]
We will denote by $\det$ the determinant on $T^{(1,0)}M$.
The following gives the scaling asymptotics for
the Kodaira-Laplacian heat kernel.

\begin{thm}\label{ASYM}
Assume that $M$ is compact.
For $T>0$,  and any $k,m\in \N$
    we have as $p\to\infty$
\begin{equation}\label{lm4.11}
\exp\!\del{\!-\frac{u}{p} D_p^2}\!(x,x)
= \sum_{r=0}^{m}
\del{\frac{p}{u}}^{\!n-r}\! e_{\infty\, r}(u,x)
+ \del{\frac{p}{u}}^{\!n-m-1}\! R_{m+1}\del{\frac{u}{p}, u,x}
\end{equation}
 uniformly for $0<u<T$ and $x\in M$, in the $\cC^k$-norm on
$\cC^\infty(M,\End(\Lambda (T^{*(0,1)}M) \otimes E))$,
i.\,e., the reminder term $R_{m+1}(\frac{u}{p}, u,x)$
is uniformly bounded for $0<u<T$, $x\in M$, $p\in \N^*$.
For any $r\in\N$, the coefficient $e_{\infty\, r}(u,x)$ is smooth
at $u=0$ and the principal term is given by
\begin{align}\label{eq:z.3}
e_{\infty 0}(u,x)
= \frac{1}{(2\pi)^{n}}
\frac{\det (u \dot{R}^L_{x})\exp(2u \omega_{d,x})}
{\det (1-\exp(-2u\dot{R}^L_{x}))} \otimes \Id_{E}.
\end{align}
\end{thm}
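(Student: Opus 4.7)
The approach is the analytic localization of Bismut--Lebeau in the form developed in \cite[\S 1.6, \S 4.2]{MM07}. The plan is: localize to a small ball near an arbitrary $x_0\in M$, rescale the problem so that $\frac{1}{p}D_p^2$ converges to an explicit model harmonic oscillator on $T_{x_0}M$, expand the resolvent in the rescaling parameter, and finally evaluate the leading coefficient via Mehler's formula.

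The first step is localization. Writing the heat operator as a Gaussian in $D_p/\sqrt{p}$ and invoking finite propagation speed of the corresponding wave equation, one shows that replacing $\exp(-\frac{u}{p}D_p^2)(x,x)$ by the diagonal of a version localized in a ball $B(x,\varepsilon)$ of fixed radius changes the kernel by $\cO(p^{-\infty})$ in every $\cC^k$-norm, uniformly in $u\in(0,T)$ and $x\in M$. This reduces the theorem to a local calculation, in which one pulls everything back to $T_{x_0}M$ via the exponential map and trivializes $L$ and $E$ by radial parallel transport along rays.

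The second step is the rescaling. Set $t=1/\sqrt{p}$ and introduce the dilation $(S_t\sigma)(Z)=\sigma(Z/t)$ on $T_{x_0}M\simeq \R^{2n}$. Conjugation produces $\mathscr{L}^{t}:=S_t^{-1}(\tfrac{1}{p}D_p^2)S_t$, whose Taylor expansion in $t$ is $\mathscr{L}^t=\mathscr{L}^0+\sum_{r\geq 1}t^r\mathscr{O}_r$, where $\mathscr{L}^0$ is an explicit twisted harmonic oscillator built from $\dot{R}^L_{x_0}$ with zeroth order term $-2\omega_{d,x_0}$, acting on sections with values in $\Lambda(T^{*(0,1)}_{x_0}M)\otimes E_{x_0}$. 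Bismut--Lebeau resolvent estimates uniform in $t$ together with the contour integral
\begin{equation*}
\exp(-u\mathscr{L}^t)=\frac{1}{2\pi i}\oint_\Gamma e^{-u\lambda}(\lambda-\mathscr{L}^t)^{-1}\,d\lambda
\end{equation*}
allow one to expand the resolvent in powers of $t$ and integrate termwise, yielding an asymptotic expansion of the Schwartz kernel of $\exp(-u\mathscr{L}^t)$ at $Z=0$. Undoing the rescaling contributes the overall factor $t^{-2n}=p^n$, and regrouping by powers of $u/p$ produces \eqref{lm4.11}; the principal coefficient $e_{\infty 0}(u,x_0)$ is by construction the diagonal of the heat kernel of the model $\mathscr{L}^0$, and Mehler's formula for twisted harmonic oscillators gives \eqref{eq:z.3} by direct computation.

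The main obstacle is the uniformity in $u\in(0,T)$. On any compact subinterval $[u_0,T]$ the contour-integral machinery goes through with routine estimates, but as $u\to 0^+$ the model heat kernel itself blows up like $u^{-n}$, and the semiclassical expansion must be reconciled with the classical short-time Minakshisundaram--Pleijel expansion of $\exp(-\frac{u}{p}D_p^2)$. This forces one to use weighted Sobolev estimates on the rescaled resolvent that depend polynomially, rather than exponentially, on $1/u$, as developed in \cite[\S 4.2]{MM07}. Establishing these estimates and simultaneously verifying the smoothness at $u=0$ of each coefficient $e_{\infty r}(u,x)$ is the delicate analytic heart of the argument.
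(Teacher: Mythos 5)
Your overall strategy — localization via finite propagation speed, rescaling near a point, expansion of the rescaled operator, Mehler's formula for the leading term — is indeed the approach the paper takes, following \cite[\S 1.6, \S 4.2]{MM07}. But you have the crucial step wrong: you set $t = 1/\sqrt{p}$, which is the rescaling used in \cite{MM07} for the Bergman kernel and for fixed-time heat kernel asymptotics, whereas the paper explicitly departs from this and sets $t = \sqrt{u}/\sqrt{p}$. This $u$-dependent rescaling is the new idea of this proof, and the paper flags it in so many words: after defining $\cL^{t,u}_2$ it notes that $t = 1/\sqrt{p}$ was used in \cite[(1.6.27)]{MM07} but that here one rescales by $t=\sqrt{u/p}$ precisely to obtain an expansion in powers of $p/u$.

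The consequence of the correct rescaling is that the heat time of the rescaled operator is fixed at $1$, not $u$: one studies $\exp(-\cL^{t,u}_2)$, not $\exp(-u\,\cL^{t})$. The $u$-dependence is absorbed into the coefficients of the model operator $\cL^{0,u}_2$ in \eqref{lm4.29} (where $\frac{u}{2}R^L_{x_0}(Z,\cdot)$, $-2u\omega_{d,x_0}$, $-u\tau_{x_0}$ appear) and into the perturbation operators $\mO_{u,r}$ of Theorem \ref{bkt2.5}, whose coefficients are \emph{polynomial in $u$ of bounded degree}. Because the heat time is $1$, the kernel coefficients $J_{r,u}(Z,Z')$ of Theorem \ref{bkt3.6} are smooth, bounded functions of $u\in[0,T]$, and smoothness of $e_{\infty r}(u,x)$ at $u=0$ is immediate; the apparent $u^{-n}$ singularity arises only from undoing the coordinate rescaling by $t^{-2n} = (p/u)^n$ in \eqref{abk3.45}. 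With your $t = 1/\sqrt{p}$ instead, the $u\to 0^+$ singularity sits in the heat semigroup itself, and the "weighted Sobolev estimates polynomial in $1/u$" you invoke to repair this are not what \cite[\S 4.2]{MM07} develops — that reference works at fixed time and its resolvent estimates do not track $u$-dependence in the way you need. In short, the $u$-uniformity you correctly single out as the delicate point is not salvaged by an estimate; it is dissolved by choosing the rescaling parameter to be $\sqrt{u/p}$.
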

The leading term of the scaling asymptotics has been known for
some time in connection with the Demailly
holomorphic Morse inequalities \cite{De:85}. Bismut \cite{B87c}
and Demailly \cite{De:91} used the heat kernel to prove these
inequalities, based on the principal term
of the scaling asymptotics above.  The new feature of
Theorem \ref{ASYM} is the complete asymptotic expansion in the
$\cC^{\infty}$ sense, and the computability of the coefficients.
It is a kind of generalization, in terms of both statement and proof,
of the Bergman/Szeg\"o kernel expansion on the diagonal
given in \cite{Catlin99}, \cite{Z} in the case of positive
Hermitian holomorphic line bundles. The main
feature of the heat kernel expansion is its generality: it does not
require that $(L, h^L)$  be a positive line bundle,
nor even that $(M, \Theta)$ be a K\"ahler manifold. In the general
case, the Bergman/Szeg\"o kernel is difficult  to analyze
and the heat kernel is a good substitute.
Note that for $u>0$ fixed, Theorem \ref{ASYM} was obtained
in \cite[(1.4)]{DLM06}, \cite[(4.2.4)]{MM07}.

Let us give another form of the principal term \eqref{eq:z.3}
in order to recover Demailly's formula \cite[Theorem 4.4]{De:91}.
Let us choose $\{w_j\}_{j=1}^n$ to be an orthonormal basis of
$T^{(1,0)}M$ such that
\begin{align}\label{lm4.4}
\dot{R}^L(x)= {\rm diag}
(\alpha_1 (x), \ldots, \alpha_n(x))\in \End (T^{(1,0)}_{x} M)\,.
\end{align}
The elements $\alpha_1 (x), \ldots, \alpha_n(x)$ are called
the eigenvalues of $R^L$ with respect to $\Theta$.  Then
\begin{equation}  \label{lm4.51}
\om_d(x)=- \sum_{j} \alpha_j(x) \overline{w}^j\wedge
\,i_{\overline{w}_j},  \quad \tau(x)=\sum_j  \alpha_j(x).
\end{equation}
We have by \cite[(1.6.4)]{MM07}
\begin{equation}\label{lm4.52}
e_{\infty 0}(u,x)=u^n\prod_{j=1}^n\, \frac{\alpha_j(x)
\left(1+ (\exp(-2u \alpha_j(x))-1)\ov{w}^j
\wedge i_{\ov{w}_j}\right)}{
2\pi (1- \exp(-2u \alpha_j(x)))}\otimes \Id_E\,.
\end{equation}
Here we use the following convention: if an eigenvalue $\alpha_j(x)$
of $\dot{R}^L_{x}$ is zero, then its contribution to
$\det (\dot{R}^L_{x})/\det(1-\exp(-2u\dot{R}^L_{x}))$
is $1/(2u)$.

Remark that the operator $D_p^2=2\Box_p$ preserves
the $\Z$-grading of
the Dolbeault complex $\Omega^{0,\bullet}(M, L^p\otimes E)$.
We will denote by $\Box^q_p$ the restriction of $\Box_p$
to $\Omega^{0,q}(M,L^p\otimes E)$. We set
\begin{equation}\label{lm4.53}
e^q_p(u,x)=\tr\exp\!\del{-\frac{2u}{p} \Box^q_p}\!(x,x)
=\tr_q \exp\!\del{-\frac{u}{p} D^2_p}\!(x,x)\,.
\end{equation}
where $\tr_q$ is the trace of an operator
acting on $E^q_{p}$. 
By taking the trace
$\tr_q$ of \eqref{eq:z.3} we obtain
\begin{equation}\label{lm4.112}
e^q_p(u,x)
= \sum_{r=0}^{m}
\del{\frac{p}{u}}^{\!n-r}\! e^q_{\infty\, r}(u,x)
+ \del{\frac{p}{u}}^{\!n-m-1}\! R^q_{m+1}\del{\frac{u}{p}, u,x}\,,
\end{equation}
where
\[
e^q_{\infty\, r}(u,x)=\tr_q e_{\infty\, r}(u,x)\,,\quad
R^q_{m+1}\del{\frac{u}{p}, u,x}
=\tr_q R_{m+1}\del{\frac{u}{p}, u,x}\,.
\]
We obtain thus from \eqref{lm4.52},
\begin{equation}  \label{lm4.5}
e^q_{\infty 0}(u,x) = \rank(E)(4\pi)^{-n}\Big(\sum_{|J|= q}
e^{u (\alpha_{\complement J}-\alpha_{J})}\Big)
\prod_{j=1}^n \frac{u\alpha_j(x)}{\sinh (u\alpha_j(x))}\,\cdot
\end{equation}
We use the following notation
for a multi-index $J\subset\{1,\ldots,n\}$:
\[
\alpha_J=\sum_{j \in J} \alpha_j\,,\quad \complement J
=\{1,\ldots,n\}\setminus J\,.
\]
It is understood that
\begin{equation*}
\frac{\alpha}{\sinh \alpha u} = \frac1u\,,\quad
\text{when ${\alpha}=0$}\,.
\end{equation*}

\subsection{Scaling asymptotics of the heat kernel of the
associated principal bundle}

We now state a closely result of the scaling asymptotics of
the heat kernel for the Bochner Laplacian
$\nabla_p^* \nabla_p$.
The method also applies to the Kodaira Laplacian but we
only present it in this case. For simplicity, we do not
twist by a vector bundle $E$.

 As above, we  denote by $(M, \Theta)$ a compact complex
 $n$-manifold with  Hermitian metric $\Theta$,
with volume form $dv_{M} = \frac{\Theta^n}{n!}$, and
let $(L, h^L) \to M$ be a
holomorphic line bundle  with curvature $R^L$.
Let $\nabla_p$ denote the Chern connection
associated to $h_L$ on $L^p$.

Denote  by $L^*$ the dual line bundle and let $D^*_h$ be
the unit disc bundle of $L^*$ with
respect to the dual metric $h^{L^*}$.
The boundary $X = X_h = \partial D^*_h$ is then a principal
$S^1$ bundle $\pi: X \to M$ over $M$.
 The powers $L^p$ of $L$ are the line bundles
 $L^p = X \times_{\chi_p} \C$
associated to the characters
$\chi_p(e^{i \theta}) = e^{i p \theta}$ of $S^1$.
Sections $s $ of $ L^p$ naturally lift to $L^*$ as equivariant
functions $\hat{s}(\lambda) (x) = \lambda( s(\pi(x))$,  and
the lifting map identifies $L^2(M, L^p)$
with the space $L^2_p(X)$ of equivariant functions on $X$
transforming
by $e^{i p \theta}$ under the $S^1$ action on $X$,
which we denote by $e^{i \theta}\cdot x.$
The Chern connection induces an $S^1$-invariant
vertical 1-form $\beta$, defining a connection on $TX$
(see \S \ref{GEOM}).

 We  define the Bochner Laplacian $\Delta^{L^p}$ on $L^p$
by $\Delta^{L^p} = \nabla_p^* \nabla_p$ where $*$ is
taken with respect to $dv_{M}$.  Under the lifiting
identificiation $L^2(M, L^p) \simeq L^2_p(X)$, $\Delta^{L^p}$
corresponds to restriction to $L^2_p(X)$ of  the horizontal
Laplacian $\Delta_H = d_H^* d_H$, where $d_H$ is
the horizontal differential on $X$ for the connection $\beta$.

The lifting identification induces an identifiction of heat kernels,
which takes the following form on the diagonal:
Let   $x \in X, z \in M $ and $\pi(x) = z$. Then
 \begin{equation}\label{FC}  \exp\!\left(\!- \frac{u}{p} \Delta^{L^p}\right) (z,z)
= \int_{S^1}e^{-(u/p) \Delta_H} (e^{i \theta} x, x) %
    e^{- i p \theta} d\theta. \end{equation}
Using this formula, we prove




\begin{thm}\label{ASYMX}
Assume that $M$ is compact. With the above notations and
assumptions,   there exist smooth coefficients
$e^H_{\infty, r}(u, z)$ so that for $T>0$,  and any $k,m\in \N$
    we have as $p\to\infty$

\begin{equation}\label{lm4.11b} \begin{split}
    \exp\!\del{\!-\frac{u}{p} \Delta^{L^p}}\!(z,z)
& = \sum_{r=0}^{m}
\del{\frac{p}{u}}^{\!n-r}\! e^H_{\infty\, r}(u,z)
+ \del{\frac{p}{u}}^{\!n-m-1}\! R_{m+1}\del{\frac{u}{p}, u,z}.
\end{split}
\end{equation}
 uniformly for $0<u<T$ and $z\in M$, in the $\cC^k$-norm on
$\cC^\infty(M)$,
i.\,e., the reminder term $R_{m+1}(\frac{u}{p}, u,z)$
is uniformly bounded for $0<u<T$, $z\in M$, $p\in \N^*$.
\end{thm}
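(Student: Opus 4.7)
The strategy, following the Zelditch--Catlin approach to the Szeg\"o kernel, is to combine the Fourier decomposition \eqref{FC} with a Hadamard heat parametrix on the total space $X$ and then carry out a complex Gaussian evaluation in $\theta$. The first step is to replace $\Delta_H$, which is only subelliptic on $X$, by the elliptic full Laplacian $\Delta_X$. Since the vertical generator $T$ of the $S^1$-action commutes with $\Delta_H$ and acts as multiplication by $ip$ on $L^2_p(X)$, the natural principal-bundle metric $g^X=\pi^* g^{TM}+\beta\otimes\beta$ yields $\Delta_X=\Delta_H+\Delta_V$ with $\Delta_V|_{L^2_p}=p^2$, so that \eqref{FC} rewrites (up to a harmless $2\pi$ factor) as
\[
\exp\!\del{-\tfrac{u}{p}\Delta^{L^p}}(z,z) \;=\; e^{up}\int_{S^1} e^{-(u/p)\Delta_X}(e^{i\theta}x,x)\,e^{-ip\theta}\,d\theta,
\]
so that now the operator on the right is genuinely elliptic.

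The second step is to insert the standard Hadamard expansion on the compact Riemannian manifold $X^{2n+1}$,
\[
e^{-t\Delta_X}(y,x)\sim (4\pi t)^{-(2n+1)/2}\,e^{-d_X(y,x)^2/(4t)}\sum_{k\geq 0} t^k U_k(y,x),
\]
valid with uniform $\mC^\infty$ remainder for $d_X(y,x)<\inj(X)$. With $t=u/p$, the contribution to the $\theta$-integral from $|\theta|\geq\delta$ is super-exponentially small since $d_X(e^{i\theta}x,x)\geq c\delta$ there, and is absorbed in the error term. In a small neighborhood of $\theta=0$, Taylor expansion of the fiber geometry gives $d_X(e^{i\theta}x,x)^2=\theta^2+O(\theta^4)$ (the fiber tangent has unit length for $g^X$) and $U_k(e^{i\theta}x,x)=\sum_j \theta^j U_k^{(j)}(x)$.

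The third and central step is the complex-Gaussian identity
\[
\int_{\R} e^{-p\theta^2/(4u)-ip\theta}\,d\theta \;=\; \sqrt{4\pi u/p}\;e^{-up},
\]
obtained by completing the square $p\theta^2/(4u)+ip\theta=p(\theta+2iu)^2/(4u)+up$ and shifting the contour to the saddle $\theta=-2iu$. Together with the prefactor $e^{up}$, this cancels the apparently divergent exponential and produces the leading scale $(p/u)^n$. Higher Taylor coefficients of the amplitude, integrated against the moments $\int\theta^{2j}e^{-p\theta^2/(4u)-ip\theta}d\theta$, contribute the subleading terms; by parity only even powers in $\theta$ survive, yielding the integer-power expansion $\sum_r(p/u)^{n-r}e^H_{\infty,r}(u,z)$ whose coefficients are explicit polynomials in the jets at $x$ of the $U_k$, of $d_X^2-\theta^2$, and of the connection $\beta$.

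Uniformity in the $\mC^k$-norm in $z$ follows by differentiating the parametrix: derivatives in $z\in M$ lift to horizontal derivatives on $X$ and pass through the $\theta$-integration, so the same scheme applies to the differentiated kernel. Uniformity in $0<u<T$ is inherited from the standard uniform control of the Hadamard remainder, and smoothness of the $e^H_{\infty,r}$ at $u=0$ from the fact that the $U_k$ are themselves polynomial in $u$-derivatives at $t=0$. The main technical obstacle will be the rigorous justification of the complex Gaussian / contour shift on the compact interval $S^1$ rather than on $\R$: one localizes with a smooth cutoff $\psi(\theta)$ supported in $|\theta|<\delta<\inj(X)$ and estimates the boundary error from $(1-\psi)$ by repeated integration by parts in $\theta$, which gains arbitrary negative powers of $p$ at the cost of derivatives of $\psi$, thereby pushing those contributions into $R_{m+1}$.
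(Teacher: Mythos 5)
Your plan starts from the same formula \eqref{FC} that the paper uses and aims at the same complex Gaussian/saddle-point evaluation, so the high-level strategy matches the paper's. However, you mislocate the technical crux, and the fix you sketch does not actually close the gap. The essential difficulty is not the compactness of $S^1$; the paper handles that by lifting to the fiberwise universal cover $\widetilde X$, where the fiber distance-squared is exactly $\theta^2$ (there is no $O(\theta^4)$ term, by the Pythagorean identity of Lemma~\ref{Dist}), and then summing a Poincar\'e series. The two real obstacles are: (a) the contour shift to $\theta\mapsto\theta-2ui$ requires that the Hadamard amplitudes $U_k(e^{i\theta}x,x)$ be holomorphic in the fiber variable, which is not automatic since the metric data is only $\cC^\infty$; the paper proves this inductively from the transport equations after showing that the fiber geodesic flow, the distance-squared, the volume density $j$, and the operator $\Delta^X$ all admit holomorphic continuations into $L^*$ (see the propositions on analytic continuation and Corollary~\ref{ACPAR}); and (b) the Hadamard \emph{remainder} $R_M$ must also be analytically continued and controlled, and its naive real-domain bound $O(t^{M+1})$ is useless once multiplied by $e^{up}$. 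Lemma~\ref{REMLEM} is precisely the nontrivial estimate
\[
R_M(u/p,\tilde x, i\theta-2iu+\tilde x)=O\Big(e^{-p\theta^2/u}\,(u/p)^{-\frac{\dim X}{2}+M}\,e^{pu}\Big),
\]
whose proof requires a Duhamel formula for $R_M$, Kannai's off-diagonal Gaussian upper bound, and a stationary-phase estimate of the resulting convolution. Your proposed cutoff-plus-integration-by-parts argument on $(1-\psi)$ does not account for the $e^{up}$ prefactor: for $|\theta|\geq\delta$ the naive bound is $e^{up}\,(u/p)^{-\frac{\dim X}{2}}e^{-\delta^2 p/(4u)}$, which is small only when $u<\delta/2$, and $\delta$ is constrained by the injectivity radius while $u$ is allowed to range over $(0,T)$ with $T$ arbitrary. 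Likewise, integration by parts gains powers of $p^{-1}$ but cannot beat $e^{up}$. You need the contour shift to apply to the whole kernel (not just the Gaussian factor) before any region splitting, and that forces you to confront the analytic continuation of the amplitudes and the remainder head on.

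Two smaller points: the Pythagorean structure of the Kaluza--Klein metric means $d_{\widetilde X}(\tilde x,\tilde x+i\theta)^2=\theta^2$ exactly in the lift, so your ``Taylor expansion of the fiber geometry'' is unnecessary and would be confusing to carry through; and the coefficients $e^H_{\infty,r}$ in the paper come out as specific Taylor coefficients in $\theta$ of the holomorphically-continued $\Phi_\ell(x,i\theta+2u)$ (cf.\ \eqref{SUBPRIN}), not merely ``polynomials in jets of $U_k$, $d_X^2-\theta^2$, and $\beta$'' --- after the contour shift those jets are evaluated at the complex argument $2u$, which is what produces the $u$-dependence of $e^H_{\infty,r}(u,z)$.
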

In view of \eqref{FC} we could also state this result as giving
the scaling asymptotics of the $p$th Fourier
component of the horizontal heat kernel.
As  discussed in \S \ref{PARAMETRIX}, the reason for
using \eqref{FC}, is that there exists a rather concrete
Hadamard style parametrix for
$e^{- u \Delta_H}(x, y)$, involving the Hadamard heat kernel
coefficients $\Phi_j$ of a principal bundle,
computed in  \cite[Theorrem 5.8]{BGV}.
All the properties stated in the theorem follow from standard facts
about the
stationary phase method and from the properties of
the coefficients $\Phi_j$.
 The principal term is given by (cf. \eqref{lm4.1} -\eqref{eq:z.3}):
\begin{align}\label{eq:z.3b}
e_{\infty 0}^H(u,x)
= \frac{1}{(2\pi)^{n}}
\frac{\det (u \dot{R}^L_{x}) \exp(-u\tau)}
{\det (1-\exp(-2u\dot{R}^L_{x}))}\,\cdot
\end{align}
Recall that $\tau$ is the trace of the curvature $R^L$ defined in \eqref{lm4.3}.
The subleading term is given by
\begin{equation} \label{SUBPRIN}
\Big(\frac{p}{4 \pi u}\Big)^{\!n} \left[u \Phi_1(x,   2 u ) +
\frac{\partial^2}{\partial \theta^2}
\Phi_0(x, i\theta + 2 u )\big|_{\theta = 0}  \right]\,.
 \end{equation}
Let us compare the expansions \eqref{lm4.11} of the
Kodaira Laplacian and \eqref{lm4.11b} of the Bochner Laplacian.
Note that by Lichnerowicz formula \eqref{alm4.21a}
we have
$D^2_p=\Delta^{L^p}-p\,\tau + \cO(1)$
on $\Omega^{0,0}(M,L^p)$.
Consider the rescaled operator $\widetilde{\cL}^{\,\,0,u}_2$
corresponding to $\frac{u}{p}\Delta^{L^p}$ analogous to
$\cL^{\,\,0,u}_2$ as in \S \ref{MMPF};
the analogue of \eqref{lm4.29} is
$\widetilde{\cL}^{\,\,0,u}_2=-\sum_{i}(\nabla_{0,u,e_{i}})^2$.
Thus the difference between \eqref{eq:z.3b}
and  \eqref{eq:z.3} for $(0,0)$-forms is the factor $\exp(-u\tau)$.

 If one uses the  Lichnerowicz formula to express the
 Kodaira Laplacian in terms of the horizontal (Bochner) Laplacian,
 one may then apply  the  Duhamel formula to
express the heat kernel of the Kodaira Laplacian in terms of that
of the Bochner Laplacian.  Alternatively,
one may go through the parametrix construction as
in \cite[Theorem 5.8]{BGV} but with the Kodaira Laplacian. The
transport equations change  because of the extra curvature term.
We omit the details since we are already giving a proof of
Theorem \ref{ASYM} by another method.
We also leave to the reader the adaptation of the analytic
localization proof of Theorem \ref{ASYM} to obtain
Theorem \ref{ASYMX}.
\subsection{Relation to the holomorphic Morse inequalities}

The original application  of the scaling asymptotics was to
estimating dimensions
 $h^q(L^p\otimes E):= \dim H^q(M, L^p\otimes E) $ of
 holomorphic sections \cite{De:85,De:91,B87c,MM07}.  We follow
the exposition of \cite{MM07,Z2}.

Let $M(q) \subset M$ be the subset in which $\sqrt{-1}R^L$
has precisely $q$ negative
eigenvalues and $n-q$ positive eigenvalues.
Set $M(\leqslant q)=\bigcup_{i=0}^q M(i)$,
$M(\geqslant q)=\bigcup_{i=q}^n M(i)$.

The holomorphic Morse
inequalities of J.-P.\ Demailly \cite{De:85} give asymptotic estimates
for the alternating
sums of the dimensions $h^q(L^p\otimes E)$ as $p\to\infty$.

\begin{thm} \label{lmt5.1}
Let $M$ be a compact complex manifold with $\dim M=n$,
and let $(L,h^L)$, $(E,h^E)$ be Hermitian holomorphic vector
bundles on $M$,
$\rank({L})=1$.
As $p\to\infty$, the following strong Morse inequalities hold
 for every $q=0,1,\dots,n$:
\begin{equation} \label{morse-dem}
\sum\limits_{j=0}^{q}(-1)^{q-j}h^j(L^p\otimes E)
\leqslant \rank({E}) \frac{p^n}{n!} \int_{M(\leqslant q)}(-1)^q
\Big(\tfrac{\sqrt{-1}}{2\pi}R^L\Big)^n+o(p^n)\, ,
\end{equation}
with equality for $q=n$ {\rm(}asymptotic Riemann-Roch-Hirzebruch
formula{\rm)}.

Moreover, we have the weak Morse inequalities
\index{$\rank({E})$ rank of $E$}
\begin{equation} \label{lm5.4}
h^q(L^p\otimes E)\leqslant \rank({E})
 \frac{p^n}{n!}\int_{M(q)}(-1)^q
\left(\tfrac{\sqrt{-1}}{2\pi}R^L\right)^n +o(p^n).
\end{equation}
\end{thm}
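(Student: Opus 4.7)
The plan is to run the Bismut--Demailly heat equation proof of the holomorphic Morse inequalities, with Theorem~\ref{ASYM} (in the trace form \eqref{lm4.112}--\eqref{lm4.5}) as the analytic input. I would divide the proof into three steps: reducing dimensions to heat traces, applying the expansion at fixed $u$, and letting $u\to\infty$ to localize on the Morse sets $M(j)$.

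\emph{Step 1 (spectral reduction).} By Hodge theory, $h^j(L^p\otimes E)=\dim\ker\Box_p^j$. Since $\overline\partial$ commutes with $\Box_p$, each positive $\lambda$-eigenspace $V_{j,\lambda}\subset\Omega^{0,j}(M,L^p\otimes E)$ is a finite-dimensional acyclic subcomplex under $\overline\partial$, and the standard telescoping identity gives $\sum_{j=0}^q(-1)^{q-j}\dim V_{j,\lambda}\geq 0$. Weighting by $e^{-(2u/p)\lambda}$ and summing over $\lambda>0$ yields, for every $u>0$,
\[
\sum_{j=0}^q(-1)^{q-j}h^j(L^p\otimes E)\;\leq\;\sum_{j=0}^q(-1)^{q-j}\tr\exp\!\bigl(-\tfrac{2u}{p}\Box_p^j\bigr).
\]
\emph{Step 2 (insert the expansion).} Integrating \eqref{lm4.112} over $M$ and using the uniform remainder bound of Theorem~\ref{ASYM}, at each fixed $u>0$,
\[
p^{-n}\tr\exp\!\bigl(-\tfrac{2u}{p}\Box_p^j\bigr)\;=\;u^{-n}\!\int_M e^j_{\infty 0}(u,x)\,dv_M\,+\,O(p^{-1}).
\]
\emph{Step 3 (let $u\to\infty$).} Substituting $\sinh u\alpha_k=\tfrac12(e^{u\alpha_k}-e^{-u\alpha_k})$ in \eqref{lm4.5}, one verifies that at every $x$ with all $\alpha_k(x)\neq 0$ only the multi-index $J=\{k:\alpha_k(x)<0\}$ survives in the sum $\sum_{|J|=j}$, producing
\[
\lim_{u\to\infty}u^{-n}e^j_{\infty 0}(u,x)\;=\;\frac{\rank(E)}{(2\pi)^n}\,\mathbf{1}_{M(j)}(x)\,|{\det}\,\dot R^L(x)|.
\]

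Putting these pieces together, $\limsup_{p\to\infty} p^{-n}\sum_{j=0}^q(-1)^{q-j}h^j(L^p\otimes E)$ is bounded by the $u\to\infty$ limit of $u^{-n}\int_M\sum_{j=0}^q(-1)^{q-j}e^j_{\infty 0}(u,x)\,dv_M$. Using $|\det\dot R^L|=(-1)^j\det\dot R^L$ on $M(j)$, the alternating sum telescopes to $(-1)^q\mathbf{1}_{M(\leq q)}\det\dot R^L$, and the identity $\omega^n=\tfrac{n!}{(2\pi)^n}\det\dot R^L\,dv_M$ converts the resulting integral to $\tfrac{\rank(E)}{n!}\int_{M(\leq q)}(-1)^q\omega^n$, yielding \eqref{morse-dem}. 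For $q=n$ the bound becomes $\tfrac{\rank(E)}{n!}\int_M\omega^n$, and equality follows by matching against the Hirzebruch--Riemann--Roch formula $\chi(L^p\otimes E)=\rank(E)\tfrac{p^n}{n!}\int_M\omega^n+O(p^{n-1})$. The weak inequality \eqref{lm5.4} is then extracted by adding the strong inequalities at levels $q$ and $q-1$.

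The main obstacle is the interchange of the limits $p\to\infty$ and $u\to\infty$. One must take $p\to\infty$ first at each fixed $u$, which is legitimate because the remainder in \eqref{lm4.112} is controlled uniformly on $(0,T)\times M\times\N^*$ by Theorem~\ref{ASYM}; only then can one pass $u\to\infty$ under the integral, using dominated convergence. The necessary uniform bound on $|u^{-n}e^j_{\infty 0}(u,x)|$ for $u\geq 1$ is read off from the product-form factorization in Step~3, in which each $\alpha_k/\sinh(u\alpha_k)$ is bounded and the exponentials $e^{u(\alpha_{\complement J}-\alpha_J)}$ are absorbed by the $\sinh$-denominators. The degeneracy locus $\{\det\dot R^L=0\}$ lies outside every $M(i)$, contributes zero to the Morse integrals, and is covered by the convention stated immediately after \eqref{lm4.5}.
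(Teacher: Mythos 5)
Your proof is correct and follows essentially the same route as the paper: the spectral/telescoping bound (\cite[Lemma~1.7.2]{MM07}) reducing the alternating sum of $h^j$'s to the alternating sum of heat traces, the $p\to\infty$ asymptotic at fixed $u$ from Theorem~\ref{ASYM} in the trace form \eqref{lm4.112}, and the $u\to\infty$ limit under dominated convergence using the product form \eqref{lm4.5}. The only minor variations are that the paper obtains the weak inequalities directly from the one-sided bound $h^q(L^p\otimes E)\leqslant\int_M e^q_p(u,x)\,dv_M$ rather than by adding the strong inequalities at levels $q$ and $q-1$, and it leaves the $q=n$ equality case (which you close via McKean--Singer together with the asymptotic Riemann--Roch formula) implicit.
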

%
%
%
%
It was observed by J.-M.~Bismut \cite{B87c}
that the leading order scaling asymptotics of the  heat kernel
could be used to simplify the proof of these
inequalities. Bismut's probability arguments were replaced by
classical heat kernel methods by J.-P.~Demailly \cite{De:91}
and by T.~Bouche
\cite{Bou.1,Bou.2}.  Since one obviously has
\begin{equation} h^q(L^p\otimes E)
    \leq \int_M e^q_p(u,x) dv_{M}  \end{equation}
 for any $u$, we can   let $p \rightarrow \infty$ to obtain
\begin{equation} \limsup_{p \to \infty} p^{-n} h^q(L^p\otimes E)
    \leq \int_M u^{-n} e_{\infty 0}^q(u,x) dv_{M}  \end{equation}
and then let $u \to \infty$ to obtain the
\emph{weak Morse inequalities},
\begin{equation}
    \;\; \limsup_{p \to \infty} p^{-n} h^q(L^p\otimes E)
    \leq \rank({E})  (-1)^q  \frac{1}{n!}
  \int_{M(q)} \Big(\tfrac{\sqrt{-1}}{2\pi}R^L\Big)^n\,.\end{equation}
In the last step we used that
\begin{equation}\label{FACTORS} \mbox{As}\; u \to +\infty,\;\;\;\;\;
  \frac{e^{u \alpha} u \alpha}{\sinh u \alpha}
  = \frac{ 2u \alpha}{1 - e^{-2u \alpha}}  \sim
 \left \{ \begin{array}{ll}   2u \alpha +  O (u e^{- 2 \alpha u})\,,
 & \alpha > 0\,, \\  & \\
1\,, & \alpha = 0\,, \\ & \\
O(u e^{- 2 |\alpha| u}), & \alpha < 0\,. \end{array} \right.
\end{equation}
In the case of $q = 0$ we have:
\begin{equation}  e_{\infty 0}^0(u, x) \sim   \rank({E})\left \{
 \begin{array}{ll}   (4\pi)^{-n}(2u)^{ r}
     \prod_{i, \alpha_i(x)>0} \alpha_i(x),
     & \alpha_i(x) \; \geq 0 \;\; \forall i, \;
     r=\sharp\{i:\alpha_i(x) > 0\}\, ,
     \\  & \\
1 & \alpha_i(x) = 0\,, \;\forall i,\\
& \\0  & \exists i: \alpha_i(x) < 0.  \end{array} \right.
\end{equation}
For general $q$ the asymptotics depends in a more complicated
way on the eigenvalues of $\sqrt{-1}R^L$.  Assume first
that $x \in M(q)$ so $\sqrt{-1}R^L$ is non-degenerate at
$x$ and  let $J_{-}(x)$ denote the  set of $q$ indices for which
$\alpha_j(x) <0,$ resp.
 $J_+(x)$ denote the set of indices for which $\alpha_j(x) > 0.$
 The only term  $\alpha_{\complement J}-\alpha_{J}$ which
makes a non-trivial asymptotic contribution is the one for which
$J = J_-(x).$  Hence
\begin{equation}
\begin{split}
u^{-n} e_{\infty 0}^q(u, x)  & \sim
 \rank({E}) (4\pi)^{-n}u^{- n }   \prod_{j} e^{u |\alpha_j(x)|}
 \frac{u \alpha_j(x)}{\sinh (u \alpha_j(x))}\\ & \\
& \sim \rank({E}) (-1)^q \prod_{j}\dfrac{\alpha_j(x)}{2\pi}\,\cdot
\end{split}
\end{equation}
Now assume that the curvature is degenerate at $x$, with
$n_-$ negative eigevalues, $n_0$ zero eigenvalues and
$n_+$ positive eigenvalues.  Since we must change the sign
of $q$ eigenvalues and since any negative eigenvalue causes
the whole product to vanish in the $u \to \infty$ limit,
the asymptotics are trivial unless $n_- \leq q$ and
$n_- + n_0 \geq q.$
If $n_- = q$ there is only one term in the $J$ sum, namely
where $J = J_-(x).$ If $n_- < q$ then we may choose any
$q - n_-$ indices of zero eigenvalues to flip. There are
$\binom{n_0}{q-n_-}$such indices. Hence in the degenerate case
with $n_- \leq q$, $n_- + n_0 \geq q$ we have
\begin{equation}\begin{split} u^{-n} e_{\infty 0}^q(u, x)  & \sim
 \rank(E)u^{- n} {\binom{n_0}{q-n_-}}  (4\pi)^{-n}
 \prod_{j \in J_+(x) \cup J_-(x)} e^{u |\alpha_j(x)|}
 \frac{u \alpha_j(x)}{\sinh u \alpha_j(x)}\\ & \\
& \sim  \rank(E)u^{- n_0 } {\binom{n_0}{q-n_-}}
(4\pi)^{-n}(-1)^{n_{-}}
\prod_{j \in J_+(x) \cup J_-(x)}  (2\alpha_j(x)). \end{split}
\end{equation}
Thus, one first takes  the limit $p \to \infty$ and then
$u \to \infty$. A natural question is whether
one can let $u \to \infty, p \to \infty$  simultaneously in the
scaling  asymptotics of  Theorem \ref{ASYM}.
 Suppose  that $\sqrt{-1}R^L$ has rank $\leq n - s$ at all points.
 Then one would conjecture that
 $h^q(L^p\otimes E) \leq \varepsilon(p)^s p^{n - s} $ as
 $p \to \infty$ where $\varepsilon(p)$ is any function such that
 $\varepsilon(p) \uparrow \infty$ as $p \to \infty.$
 Let $u(p) = \frac{p}{\varepsilon(p)}.$  Suppose that
 $u(p)/p = 1/\varepsilon(p)$ could be used as a small parameter
 in the expansion
of Theorem \ref{ASYM}. The principal term is of order
$(\frac{p}{u(p)})^n u(p)^{n - s} = \varepsilon(p)^{s} p^{n - s}$
and one would hope that the remainder is of order
$\varepsilon(p)^{s+ 1} p^{n - 1 - s}.$
Our remainder estimate in Theorem \ref{ASYM} is not
sharp enough for this application.

Let us close by reminding the proof of
the \emph{strong Morse inequalities} \eqref{morse-dem}
(cf.\ \cite[\S1.7]{MM07}).
As before, we denote by $\tr_q \exp\!\del{-\frac{u}{p} D^2_p}$
the trace of
$\exp\!\del{-\frac{u}{p} D^2_p}$ acting on
$\Omega^{0,q}(M, L^p\otimes E)$.
Then we have (using notation \eqref{lm4.53})
\begin{align}\label{lm5.6}
\tr_q \exp\!\del{-\frac{u}{p} D^2_p}\!
=\int_M e^q_p(u,x)dv_{M} (x).
\end{align}
By a linear algebra argument involving the spectral spaces
\cite[Lemma 1.7.2]{MM07} we have for any $u>0$ and
$q\in \N$ with $0\leqslant q \leqslant n$,
\begin{align}\label{lm5.7}
\sum_{j=0}^q (-1)^{q-j} h^j(L^p\otimes E)
\leqslant \sum_{j=0}^q (-1)^{q-j}
\tr_j \exp\!\del{-\frac{u}{p} D^2_p},
\end{align}
with equality for $q=n$.
Note that in the notation of \eqref{lm4.3},
\begin{align}\label{lm4.77}
\exp\!\big(\!-2u \alpha_j(x_0)\ov{w}^j\wedge i_{\ov{w}_j}\big)
=1+ \big(\exp(-2u \alpha_j(x_0))-1\big) \ov{w}^j
\wedge i_{\ov{w}_j}.
\end{align}
We denote by $\tr_{\Lambda ^{0,q}}$ the trace on
$\Lambda ^q (T^{*(0,1)}M)$. By \eqref{lm4.77},
\begin{align}\label{lm5.11}
\tr_{\Lambda ^{0,q}} [\exp(2 u \om_d)]
= \sum_{j_1<j_2<\cdots <j_q} \exp\!\Big(\!- 2 u
\sum_{i=1}^q \alpha_{j_i}(x)\Big).
\end{align}
Thus by \eqref{lm4.11},
\[
\frac{\det (\dot{R}^L/(2\pi))}
{\det(1- \exp(-2 u\dot{R}^L))}
 \tr_{\Lambda ^{0,q}} \!\big[\exp(2 u \om_d)\big]
 \]
is uniformly bounded in $x\in M$, $u>1$,
$0\leqslant q \leqslant n$. Hence
for any $x_0\in M$, $0\leqslant q \leqslant n$,
\begin{align}\label{lm5.12}
\lim_{u\to \infty} \frac{\det (\dot{R}^L/(2\pi))
\tr_{\Lambda ^{0,q}}\!\big[\exp(2 u \om_d)\big]}
{\det(1- \exp(-2 u\dot{R}^L))} (x_0)
= {\mathds 1}_{ M(q)}(x_0) (-1)^q
\det \Big(\frac{\dot{R}^L}{2\pi}\Big) (x_0)\,,
\end{align}
where ${\mathds 1}_{ M(q)}$ is the characteristic function
of $M(q)$.
%
{}From  Theorem \ref{ASYM}, \eqref{lm5.6} and \eqref{lm5.7},
we have
\begin{align}\label{lm5.13}
\begin{split}
&\limsup_{p\to\infty} p^{-n}\sum_{j=0}^q (-1)^{q-j}
h^j(L^p\otimes E) \\
&\hspace*{5mm}\leqslant \rank (E)
\int_M \frac{\det (\dot{R}^L/(2\pi))
\sum_{j=0}^q (-1)^{q-j}\tr_{\Lambda ^{0,j}}\!\big[\exp(2 u \om_d)\big]}
{\det(1- \exp(-2 u\dot{R}^L))}\,dv_{M} (x)\,,
\end{split}
\end{align}
for any $q$ with $0\leqslant q \leqslant n$ and any $u>0$.
%
Using \eqref{lm5.12}, \eqref{lm5.13} and dominate convergence,
we get
\begin{align}\label{lm5.14}
\limsup_{p\to\infty} p^{-n}\sum_{j=0}^q (-1)^{q-j}
h^j(L^p\otimes E)
\leqslant (-1)^q \rank (E) \int_{\bigcup_{i=0}^q M(i)}
\det \Big(\frac{\dot{R}^L}{2\pi}\Big)(x)\,dv_{M} (x).
\end{align}
But \eqref{lm4.4} entails
\begin{align}\label{lm5.15}
\det \Big(\frac{\dot{R}^L}{2\pi}\Big)(x) dv_{M} (x)
=\prod_j \frac{a_j(x)}{2\pi}dv_{M} (x)
= \frac{1}{n!}\Big(\frac{\sqrt{-1}}{2\pi}R^L\Big)^{\!n}.
\end{align}
{}Relations \eqref{lm5.14}, \eqref{lm5.15} imply \eqref{morse-dem}.

Let us finally mention that the original proof of Demailly of the
holomorphic Morse inequalities
is based on the asymptotics of the spectral function of the
Kodaira Laplacian $\Box^q_p$
as $p\to\infty$, the semiclassical Weyl law,
cf.\ \cite{De:85}, \cite[Theorem\,3.2.9]{MM07};
see also \cite{HsM14} for the lower terms of the asymptotics
of the spectral function.

\subsection{Organization and notation}

We end the introduction with some remarks on the organization
and notation.

We first prove Theorem \ref{ASYMX}. The proof involves the
construction of a Hadamard parametrix for the heat kernel of
a principal $S^1$ bundle adapted from \cite{BGV},
and in particular involves the connection, distance function
and volume form on the $S^1$ bundle $X_h \to M$.
The geometric background is presented in \S \ref{S1}, and then
the proof of Theorem \ref{ASYMX} is  given in \S \ref{PFX}.
The main complication is that
the heat kernel must be analytically continued to $L^*$, which is
an important but relatively unexplored aspect of heat kernels.
The proof of Theorem \ref{ASYM} is then given in  \S \ref{MMPF}.

We now record some basic definitions and notations.

\subsubsection{The complex Laplacian}

Let $F$ be a holomorphic Hermitian vector bundle over
a complex manifold $M$. Let $\Omega^{r,q}(M,F)$
be the space of smooth  $(r,q)$-forms on $X$ with values in $F$.
Since $F$ is holomorphic
the operator $\db^F:\cC^\infty(M,F)\to\Omega^{0,1}(M,F)$
is well defined.
A connection $\nabla^F$ on $F$ is said to be a
holomorphic connection
if $\nabla^F_Us=i_U(\db^Fs)$ for any $U\in T^{(0,1)}M$
and $s\in\cC^\infty(X,F)$.
It is well-known that there exists a unique holomorphic Hermitian
connection
$\nabla^E$ on $(F,h^F)$, called the {\em Chern connection}.

The operator $\db^F$ extends naturally to
$\db^F:\Omega^{{\bullet},{\bullet}}(M,F)\longrightarrow
\Omega^{{\bullet},{\bullet\,+1}}(M,F)$ and $(\db^F)^2=0$.
Let $\nabla^F$ be the Chern connection
on $(F,h^F)$. Then
we have a decomposition of $\nabla^F$ after bidegree
\begin{equation}\label{herm13}
\begin{split}
&\nabla^F=(\nabla^F)^{1,0}+(\nabla^F)^{0,1}\,,
\quad (\nabla^F)^{0,1}=\db^F\,,\\
&(\nabla^F)^{1,0}:\Omega^{{\bullet\,,\,\bullet}}(M,F)
\longrightarrow \Omega^{{\bullet}\,+1\,,\,{\bullet}}(M,F)\,.\\
\end{split}
\end{equation}
The Kodaira Laplacian is defined by:
\begin{equation}\label{herm17}
{\square}^F=\big[\,\db^F,\db^{F,*}\,\big]\,.
\end{equation}
Recall that the Bochner Laplacian $\Delta ^F$ associated to
a connection $\nabla^F$ on a bundle $F$ is defined
(in terms of a local orthonormal frame $\{e_i\}$ of $TM$) by
\begin{equation}\label{lm1.21}
\Delta^F:=-\sum_{i=1}^{2n} \left ((\nabla^F_{e_i})^2
-\nabla^F_{ \nabla ^{TM}_{e_i}e_i}\right )\,,
\end{equation}
where $\nabla ^{TM}$ is the Levi-Civita connection on
$(TM,g^{TM})$.  Moreover, $\Delta^F=(\nabla^F)^*\,\nabla^F$,
where the adjoint of $\nabla^F$ is taken with respect to
$dv_M$ (cf.\ \cite[(1.3.19), (1.3.20)]{MM07}) .

\subsubsection{\label{NOTATION} Notational appendix}

\begin{itemize}

\item $(M, J)$ is a complex manifold with complex structure $J$.

\item $g^{TM}$ is a $J$-invariant Riemannian metric on $M$,
$\Theta (\cdot, \cdot):= g^{TM}(J\cdot, \cdot)$.

\item $dv_{M}$ is the Riemannian volume form   of $(TM, g^{TM})$.

\item $(L, h^L, \nabla^L)$ is a holomorphic  line bundle
with Hermitian metric $h^{L}$
and Chern connection $\nabla^{L}$;  $(E, h^{E}, \nabla^{E})$ is a
holomophic vector bundle $E$ with Hermitian metric $h^{E}$
and Chern connection $\nabla^{E}$ on $M$.

\item $\omega : = \frac{\sqrt{-1}}{2\pi}R^L$;
$\dot{R}^L \in \End(T^{(1,0)}M)$ is the associated Hermitian
endomorphism \eqref{lm4.2};
the derivation $\om_d$ and the trace $\tau(x)$ of $R^L$ are defined in \eqref{lm4.3}.

\item Dolbeault-Dirac operator
$D_p = \sqrt{2}\Big(\overline{\partial}^{L^p\otimes E}
+ \,\overline{\partial}^{L^p\otimes E,*}\Big)$.

\item Kodaira Laplacian
$\square_p=\tfrac12 D^2_p= \overline{\partial}^{L^p\otimes E}\,
\overline{\partial}^{L^p\otimes E,*}
+\,\overline{\partial}^{L^p\otimes E,*}\,
\overline{\partial}^{L^p\otimes E}\,.$

\item $\Box^q_p$ is the resetriction of $\Box_p$
to $\Omega^{0,q}(M,L^p\otimes E)$.

\item $\nabla_p$ is the Chern connection of $(L^p, h^{L^p})$.
The Bochner Laplacian  $\Delta^{L^p}$ on $L^p$
is  $\Delta^{L^p} = \nabla_p^* \nabla_p$ where $*$ is
taken with respect to $dv_{M}$.

\item $X_h = X = \partial D^*_h $ where $D^*_h \subset L^*$
is the unit co-disc bundle. Then   $\Delta^{L^p}$
can be identified with the horizontal Laplacian $\Delta_H$ on $X$.
\end{itemize}

\section{\label{S1}Heat kernels on the principal $S^1$ bundle}

In this section, we prepare for the proof of Theorem \ref{ASYMX}
by reviewing the geometry of the principal $S^1$ bundle
$X_h \to M$ associated to a Hermitian holomorphic line bundle.
The geometry is discussed in more detail in \cite{B.G,  BSj, Z}
but mainly under the assumption
that $(L, h^L)$ is a positive Hermitian line bundle.
We do not make this assumption in Theorem \ref{ASYMX}.

  As above, we denote by  $(M, \Theta)$ a compact complex
  $n$-manifold with a Hermitian metric $\Theta$ and
then $dv_{M} = \frac{\Theta^n}{n!}$ is  its Riemannian volume form.
We consider a Hermitian
holomorphic line bundle $(L,h^L) \to M$ with curvature $R^L$.
Denote the eigenvalues of 
$\dot{R}^L$ relative to $\Theta$ by
$\alpha_1(x), \dots, \alpha_n(x)$ (cf. \eqref{lm4.4}).

Denote  by $L^*$ the dual line bundle and let $D_h^*$ be
the unit disc bundle of $L^*$ with
respect to the dual metric $h^{L^*}$.
The boundary $X = \partial D_h^*$ is then a principal
$S^1$ bundle $\pi: X \to M$ over $M$, and we denote the $S^1$
action by $e^{i \theta}\cdot x.$
 We may express the powers $L^p$ of $L$ as
 $L^p = X \times_{\chi_p} \C$
where $\chi_p(e^{i \theta}) = e^{i p \theta}.$
Sections $L^2(M, L^p)$ of $L^p$ can be naturally identified
with the space $L^2_p(X)$ equivariant functions on $X$
transforming
by $e^{i p \theta}$ under the $S^1$ action on $X$.

\begin{rem} The condition  that $(L, h^L)$ be a positive line bundle
is equivalent to the condition that  $D_h^*$ be a strictly
pseudo-convex domain in $L^*$, or equivalently that $X_h$
be a strictly pseudo-convex CR manifold. These assumptions are
used in \cite{BSj} to construct parametrices for the Szeg\"o kernel,
but are not necessary to construct  parametrices for the heat
kernels.
\end{rem}

\subsection{\label{GEOM} Geometry on a circle bundle}

The Hermitian metric $h^L$ on $L$ induces a connection
$\beta = (h^L)^{-1} \partial h^L$ on the $S^1$ bundle $X \to M$,
which is invariant under the $S^1$ action and satisfies
$\beta ( \frac{\partial}{\partial \theta}) = 1$.
Here, $\frac{\partial}{\partial \theta}$ denotes the generator
of the
action.  We denote by $V_x=\R\frac{\partial}{\partial \theta} $
its span in $T_xX$ and by $H_x= \ker \beta$ its horizontal
complement. We further equip $X$ with
the \emph{Kaluza-Klein bundle metric} $G$, defined by declaring
the vertical and horizontal
spaces orthogonal, by equipping the vertical space with the
$S^1$ invariant metric and by equipping
the horizontal space with the lift of the metric $g^{TM}$.
More precisely, it is defined by
the conditions:
$H \bot V$, $\pi_*: (H_x, G_x) \to (T_{\pi(x)}M, g^{TM}_{\pi(x)})$
is an isometry and $|\frac{\partial}{\partial \theta}|_G = 1$.
The volume form of the Kaluza-Klein metric on $X$ will be denoted
by $dv$.

The connection $\beta$ on $TX$ induces by duality a connection
on the cotangent bundle $T^*X$, defined by  $ H^*_x: = V^o_x,$
and $ V^*_x = H^o_x$.  Here,
$F^o$ denotes the annihilator of a subspace $F \subset E$
in the dual space $E^*.$ By definition we have
$V^*_x = \R \beta_x$.  The vertical, resp.\ horizontal components
of $\nu \in T^*_x X$  are given by:
\begin{equation}   \nu_V:= \Big\langle \nu,
    \frac{\partial}{\partial \theta} \Big\rangle \beta_x,\;\;\;
\nu_H = \nu - \nu_V. \end{equation}
There also exists   a natural pullback map $\pi^*: T^*M \to T^*X.$
It is obvious that $\pi^* T^*_xM \subset V^o_x$
and as the two sides have the same dimension we see that
$\pi^* T^*_xM = H^*_x X.$ We  write $\pi_*$ with
a slight abuse of notation for the inverse map
$\pi_*: H^*_x X \to T^*_xM.$

This duality can also be defined by the metric $G$, which
induces isomorphisms $\widetilde{G}_x: T_x X \to T^*_x X,$
$\widetilde{G}(X) = G(X, \cdot).$ We note that by definition of $G$,
$\beta_x = G(\frac{\partial}{\partial \theta}\,, \cdot)$
hence $\widetilde{G}: V_x \to V_x^*.$ Similarly,
$\widetilde{G}_x : H_x \to H_x^*.$

\subsection{Distance function on $X$}
To construct a parametrix for the heat kernel, we will need a
formula for the square $d^2(x,y)$  of
the Kaluza-Klein geodesic distance function  on a neighborhood
of the diagonal of $X \times X$.

We first describe the geodesic flow and exponential map
of $(X, G)$.
It  is convenient to identify $TX \equiv T^*X$ as above,
and to consider the (co-)geodesic flow on $T^*X$. This is
the Hamiltonian flow of the metric function
$|\xi|^2_G = |\xi_H|_G^2 + |\xi_V|^2.$  We note that
$|\xi_V|^2 = \langle \xi,   \frac{\partial}{\partial \theta} \rangle^2$
and that $|\xi_H|_G^2 = |\pi_* \xi_H|_g^2.$
Henceforth we put $p_{\theta}(x, \xi):=
\langle \xi,   \frac{\partial}{\partial \theta} \rangle$.
The Hamiltonian flow of $p_{\theta}$ is the
lift to $T^*X$ of the $S^1$ action on $X$, i.\,e.,
$V^u(x, \xi) = (e^{i u} x, e^{i u} \xi)$
where $e^{it} \xi$ denotes derived action of $S^1$ on $X$.
We also denote the Hamiltonian flow of $|\xi_H|_G^2$ by
$G^t_H(x, \xi).$

Let $U \subset M$ denote a trivializing open set for $X \to M$,
and let $\mu: U \to X$ denote a local unitary frame.  Also,
let $z_i, \bar{z}_i$ denote local coordinates in $U$.
Together with $\mu$ they induced local coordinates
$(z_i, \bar{z}_i, \theta)$ on $\pi^{-1}(U) \sim U \times S^1$
defined by $x = e^{i \theta} \cdot \mu(z, \bar{z}).$
Thus, $z_i$ is the pull-back $\pi^* z_i$ while $\theta$ depends
on a slice of the $S^1$ action.
They induce local coordinates $(z, p_z, \theta, p_{\theta})$
on $T^*( \pi^{-1}(U)) \subset T^*X$ by
the  rule $\eta_x = p_z dz + p_{\theta} d \theta.$
Here we simplify the notation $(z,\bar{z})$ for local coordinates
to $z$ until we need to emphasize the complex structure.
As a pullback,  $dz \in H^*_z$ and the notation $p_z dz$
for a form $\eta$ on $M$ and its pullback to $X$ are compatible.
 We also note that the canonical symplectic form
$\sigma$ on $T^*X$ is given by
$\sigma = dz \wedge d p_z + d \theta \wedge d p_{\theta}.$
Hence the Hamiltonian
vector field $\Xi_{p_{\theta}}$ of $p_{\theta}$ is given by
$\Xi_{p_{\theta}}= \frac{\partial}{\partial \theta}.$

It follows that the  Poisson bracket
$\{ |\xi_H|_g, p_{\theta}\} = \frac{\partial}{\partial \theta}
|\xi_H|_g = 0$, so $p_{\theta}$ is a constant
of the geodesic motion and
$\widetilde{G}^u = \widetilde{G}_H^u \circ V^u
= V^u \circ \widetilde{G}_H^u.$
In local coordinates
the Hamiltonian $|\xi|_G^2$ has the form
\begin{equation}
|\xi|_G^2 = \sum_{i,j = 1}^n g^{i j}(z) p_z^i p_z^j + p_{\theta}^2
\end{equation}
and the equations of the Hamiltonian flow of $|\xi|_G^2$ are:
\begin{equation} \left\{ \begin{array}{l} \dot{z}_i
    =  \sum_{j = 1}^n g^{i j}(z)  p_z^j \\ \\
\dot{p_z}_k = \frac{\partial}{\partial z_k}
\sum_{i,j = 1}^n g^{i j}(z) p_z^i p_z^j \\ \\
\dot{\theta} = 2 p_{\theta}\\ \\
\dot{p_{\theta}} = 0. \end{array} \right. \end{equation}
These equations decouple and we see that
\begin{equation}
 G^u(z, p_z, \theta, p_{\theta})
 = (G^t_M(z, p_z), \theta + 2 u p_{\theta}, p_{\theta}).
\end{equation}
It follows that the (co-) exponential map $\exp_x: T^*_xX \to X$
is given at $x = (z, \theta)$  by
\begin{equation}
    \exp_x(p_z, p_{\theta}) = e^{i 2 p_{\theta}} \exp_x (p_z, 0)
    = e^{i 2 p_{\theta}} \mu ( \exp_{z M} p_z),
\end{equation}
where $\exp_{zM}$ is the exponential map of $(M,g^{TM}).$
We observe that $G^u(x, \xi_H) = G^t_H(x, \xi_H)$.

Let $d_M(x, y)$ be the distance on $M$ from $\pi(x)$ to $\pi(y)$.
We  claim:

\begin{lemma} \label{Dist}  In a neighborhood of the diagonal
    in $X \times X$, the distance function satisfies the
Pythagorean identity
\begin{equation}
d(x, y)^2 = d_M(\pi(x), \pi(y))^2 + \theta_0^2
\end{equation}
where we write $y = e^{i \theta_0} \gamma_H(L)$ if
$\gamma_H(u)$ is the horizontal lift to $x$ of the geodesic
$\gamma(u)$ of $(M,g)$ with
$\gamma(0) = \pi(x), \gamma(L) = \pi(y).$  \end{lemma}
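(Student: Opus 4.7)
The plan is to invoke the decoupling of the Hamiltonian equations for the Kaluza--Klein geodesic flow that has just been established. By compactness of $X$, the injectivity radius $r_0$ of $(X,G)$ is positive; for $y$ within distance $r_0$ of $x$, the exponential map $\exp_x\colon T_xX\to X$ is a diffeomorphism onto a neighborhood of $x$, and I write $v:=\exp_x^{-1}(y)=v_H+v_V$ with $v_H\in H_x$ and $v_V=s\,\partial_\theta\in V_x$. Since the metric $G$ is a direct sum on $H\oplus V$ and $|\partial_\theta|_G=1$, this gives
\[
    |v|_G^2=|v_H|_G^2+s^2,
\]
so the $(X,G)$--geodesic from $x$ to $y$ has length $\sqrt{|v_H|_G^2+s^2}$.

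The key point is that the $(z,p_z)$ equations of the geodesic flow displayed above do not involve $\theta$ or $p_\theta$; they are precisely the geodesic equations on $(M,g^{TM})$. Consequently $\pi\circ\exp_x(tv)$ is the geodesic on $M$ issuing from $\pi(x)$ with initial velocity $\pi_*v_H$ and reaching $\pi(y)$ at $t=1$. For $y$ close to $x$ this is the minimizing geodesic, so $|\pi_*v_H|_g=d_M(\pi(x),\pi(y))$, and since $\pi_*$ restricts to an isometry $(H_x,G_x)\to (T_{\pi(x)}M,g^{TM}_{\pi(x)})$ we conclude $|v_H|_G=d_M(\pi(x),\pi(y))$.

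For the vertical piece, $\partial_\theta$ is a Killing vector field of constant unit $G$--norm, so its flow lines --- the $S^1$ fibers --- are geodesics of $(X,G)$ parametrized by arc length. Combined with the decoupling this yields
\[
    \exp_x(v_H+s\,\partial_\theta)=e^{is}\cdot\exp_x(v_H)=e^{is}\cdot\gamma_H(L),
\]
where $L=|v_H|_G$ and $\gamma_H$ is the horizontal lift of the geodesic $\gamma$ from $\pi(x)$ to $\pi(y)$ starting at $x$. Matching with the statement of the lemma identifies $\theta_0=s$, and the Pythagorean identity follows.

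The only point needing genuine verification is that horizontal lifts of base geodesics are geodesics of $(X,G)$; this is immediate from the equations of motion at $p_\theta=0$, where $\dot\theta=0$ while the $(z,p_z)$ system is exactly the base geodesic system, so the horizontal lift coincides with the image under $\exp_x$ of a horizontal covector. Everything else is a direct reading of the Hamiltonian equations, and there is no genuine obstacle: the whole content of the lemma is that the Kaluza--Klein splitting persists at the level of the distance function near the diagonal.
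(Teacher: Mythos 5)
Your proof is correct and takes essentially the same route as the paper's: both rest on the decoupling of the Kaluza--Klein geodesic flow over the $H\oplus V$ splitting, and then match the horizontal piece to the base geodesic and the vertical piece to the fiber angle. You phrase the argument using the tangent exponential map and the Killing-field observation that fibers are unit-speed geodesics, while the paper works in the cotangent picture and invokes Hamilton's equations directly; under the metric isomorphism $\widetilde G\colon TX\to T^*X$ (which respects $H\oplus V$) these are the same argument.
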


Indeed, by definition $d(x, y)^2 =
|\xi|_G^2 = |\xi_H|^2 + p_{\theta}^2$ where $\exp_x \xi = y.$
Now, $\gamma_H(u) $ must be  the first component of
$G^t_H(x, \xi_H)$ since both curves are horizontal lifts of
the same geodesic on $M$ with the same initial condition. Hence
$|\xi_H|^2 = d_M(\pi(x), \pi(y))^2.$
Further, $\theta_0 = p_{\theta}(\xi)$ by Hamilton's equations.

\subsection{Volume density of the Kaluza-Klein metric}

The heat kernel parametrix also involves the volume form of $(X, G)$.
Hence we consider Jacobi fields and  volume distortion under
the geodesic flow.
Following \cite[Section 5.1]{BGV}, we define
\begin{equation} \label{J} J(x, a): T_x X \to T_{\exp_x a}  X
\end{equation}
as the derivative $d_a \exp_x$ of the exponential map of
the Kaluza-Klein metric
at $a \in T_x X.$  We identify $H_x \equiv T_xM$ and
$V_x \equiv \R$.
  For completeness we sketch the proof.
\begin{prop}[{\cite[Theorem 5.4]{BGV}}]\label{JDEF}  Let
    $a = a \frac{\partial}{\partial \theta}$.
    The map $J(x,a)$ preserves the subspaces $H_x$ and $V_x.$
    Moreover
$$\begin{array}{l}
J(x,a)|_{H_x} = \dfrac{1 - e^{- \tau(\omega_x \cdot a)/2}}
{\tau(\omega_x \cdot a)/2},\\ \\
J(x, a)|_{V_x} = \Id. \end{array}$$ \end{prop}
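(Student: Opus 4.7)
The plan is to compute the differential of the exponential map at the purely vertical tangent vector $a = a\,\partial/\partial\theta \in V_x$ by analyzing Jacobi fields along the vertical geodesic it generates, which by the geodesic equations derived above (specialized to $p_z = 0$) is $\gamma(u) = e^{i 2 u a}\cdot x$, a single orbit of the $S^1$ action. The preservation of $H_x$ and $V_x$ is immediate from symmetry: $\partial/\partial\theta$ generates isometries of the Kaluza-Klein metric, so the geodesic flow along $\gamma$ commutes with the splitting $TX = H\oplus V$, and hence so does its differential. The vertical claim $J(x,a)|_{V_x}=\mathrm{Id}$ follows by noting that the variation $\gamma_s(u) = \exp_x(u(a+sb)\partial/\partial\theta)$ stays in the fiber and gives variation vector $2b\,\partial/\partial\theta$ at $u=1$, which under the natural trivialization of $V$ by $\partial/\partial\theta$ becomes the identity on $V_x\cong\R$.

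For the horizontal part, I would take $Y\in H_x$ and form the variation $\gamma_s(u) = \exp_x(u(a\,\partial/\partial\theta + sY))$. The variation vector $V(u) = \partial_s|_{s=0}\gamma_s(u)$ is a Jacobi field along $\gamma$ with $V(0)=0$ and $\nabla_u V(0)=Y$. Applying O'Neill's formulas for the Riemannian submersion $\pi\colon(X,G)\to(M,g^{TM})$, which has totally geodesic fibers, the O'Neill $T$-tensor vanishes along $\gamma$, the $A$-tensor is determined by the curvature of the connection $\beta$, and the Jacobi equation for the horizontal component of $V$ reduces along the vertical geodesic to a constant-coefficient linear ODE on $H_x \simeq T_{\pi(x)}M$. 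The coefficient operator of this ODE is (up to normalization) the endomorphism of $H_x$ obtained by contracting $R^L$ with $a$, which is exactly what the notation $\tau(\omega_x\cdot a)/2$ encodes. Solving this ODE from $u=0$ to $u=1$ with the stated initial conditions yields the expression $(1-e^{-\tau(\omega_x\cdot a)/2})/(\tau(\omega_x\cdot a)/2)$ applied to $Y$.

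The main obstacle is the careful bookkeeping of factors in identifying the three pieces of curvature data: the connection $1$-form $\beta$ on $X$, its curvature $2$-form $d\beta$ on $M$, and the curvature $R^L$ of the Chern connection on $L$ (which differ by the normalization $\omega=\frac{\sqrt{-1}}{2\pi}R^L$), together with the factor of $2$ that enters the vertical Hamiltonian flow via $\dot\theta = 2p_\theta$. Once these are reconciled, the Jacobi equation becomes a standard matrix-exponential calculation and the formula falls out in the form $\phi(A)$ with $\phi(z) = (1-e^{-z})/z$.
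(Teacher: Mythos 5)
Your overall strategy---computing $d_a\exp_x$ via Jacobi fields along the vertical geodesic $\exp_x(sa)=e^{isa}x$, and reducing to a linear ODE on $H_x\cong T_{\pi(x)}M$---is exactly the route the paper takes, and your invocation of the Riemannian-submersion structure (O'Neill tensors) is a legitimate way to organize the curvature input that the paper instead computes directly. But there is a genuine gap in the crucial step where you say the Jacobi equation ``reduces to a constant-coefficient linear ODE'' whose solution is $\phi(z)=(1-e^{-z})/z$. If the Jacobi equation is written in a Levi-Civita--parallel frame along $\gamma$, it has no first-order term: it is $y''+R(T,Y)T=0$, whose solution with $y(0)=0$, $y'(0)=Y$ has the \emph{symmetric} form $\sinh(\sqrt{c}\,s)/\sqrt{c}$ (or $\sin$, depending on sign), not $(1-e^{-z})/z$. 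The asymmetric answer $\frac{1-e^{-\tau(\omega_x\cdot a)/2}}{\tau(\omega_x\cdot a)/2}=e^{-\tau/4}\cdot\frac{\sinh(\tau/4)}{\tau/4}$ includes an orthogonal rotation $e^{-\tau/4}$, and that extra factor comes precisely from identifying $H_{\gamma(s)}$ with $H_x$ via the \emph{principal-bundle horizontal trivialization} (the isomorphism $H_{\gamma(s)}\xrightarrow{\;\pi_*\;}T_{\pi(x)}M$) rather than via Levi-Civita parallel transport on $X$. In that horizontal frame the covariant derivative picks up a first-order rotational term, $\frac{D}{ds}\leftrightarrow\partial_s+\tau(\omega\cdot a)/4$, and the Jacobi equation becomes $\left(\partial_s+\tau/4\right)^2 y=(\tau/4)^2 y$, i.e.\ $y''+(\tau/2)y'=0$; it is \emph{this} first-order equation that integrates to $y(1)=\frac{1-e^{-\tau/2}}{\tau/2}Y$. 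Without explicitly introducing the horizontal frame and the resulting first-order term, your ``standard matrix-exponential calculation'' lands on the wrong functional form (correct for the determinant, since $\det e^{-\tau/4}=1$, but not for $J$ itself). Separately, a small slip: the factor $\dot\theta=2p_\theta$ belongs to the cotangent/Hamiltonian picture, but $J(x,a)=d_a\exp_x$ is taken on $T_xX$, where the vertical geodesic is $\exp_x(sa\,\partial_\theta)=e^{isa}x$ with no factor $2$; the variation you wrote would give $2b\,\partial_\theta$, i.e.\ $J|_{V_x}=2\cdot\mathrm{Id}$, which is not what is claimed.
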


Here, $\tau$ is defined as in the proof of
\cite[Proposition 5.1]{BGV}. Namely, for a vector space $V$,
$ \tau: \bigwedge^2 V \simeq so(V)$ is defined by
\begin{equation} \label{taudef}
    \langle \tau(\alpha) e_i, e_j \rangle
    = 2 \langle \alpha, e_i \wedge e_j \rangle. \end{equation}

\begin{proof} We need to compute
    $\frac{d}{dt}\big|_{u = 0} \exp_x(a + u\xi_H).$  Let
$Y(s) = \frac{d}{dt}\big|_{u = 0} \exp_x s (a + u \xi_H).$
Then $Y(s)$ is a Jacobi field along $\exp_x (s a)$
and $J(x,a) \xi_H = Y(1).$ Hence
$$\frac{D^2}{ds^2} Y + R(T, Y) T = 0$$
where $T = \frac{\partial}{\partial \theta} $ is the tangent vector
to $\exp_x (s a).$ It is easy to see that
$Y(s)$ is horizontal, hence that
$$R(T, Y) T = \frac{1}{4} \tau(\omega_x \cdot a)^2 Y.$$
Using a parallel horizontal frame along $\exp_x (sa)$ we identify
$Y(s)$ with a curve in $H_x$ and  find that
$$\Big(\frac{\partial}{\partial s}
+ \tau(\omega \cdot a)/4\Big)^2 y(s)
= \big(\tau(\omega \cdot a)/4\big)^2 y(s).$$
The formula follows.
\end{proof}
We then have
\begin{cor}[{\cite[Corollary 5.5]{BGV}}] We have
\[\det J(x, a) = j_H( \tau(\omega_x \cdot a)/2)\,,
\:\:\text{where $j_H(A) = \det\Big(\frac{\sinh(A/2)}{A/2}\Big)$}.
\]
\end{cor}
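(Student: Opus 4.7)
The plan is a direct computation based on Proposition~\ref{JDEF}. Since $J(x,a)$ preserves the orthogonal decomposition $T_{\exp_x a}X = H \oplus V$, its determinant factors as the product of the determinants on $H_x$ and on $V_x$. The vertical block is the identity, contributing a factor of $1$, so the whole problem reduces to computing $\det$ of the horizontal block
\[
J(x,a)|_{H_x} = \frac{1 - e^{-A}}{A}, \qquad A := \tau(\omega_x\cdot a)/2 \in \End(H_x).
\]

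The key algebraic manipulation is the elementary identity
\[
\frac{1 - e^{-A}}{A} \;=\; e^{-A/2}\cdot\frac{e^{A/2}-e^{-A/2}}{A} \;=\; e^{-A/2}\cdot\frac{\sinh(A/2)}{A/2},
\]
valid on any endomorphism by functional calculus. Taking determinants,
\[
\det\!\left(\frac{1-e^{-A}}{A}\right) \;=\; e^{-\tr(A)/2}\,\det\!\left(\frac{\sinh(A/2)}{A/2}\right).
\]

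The crucial step is now to notice that by the definition \eqref{taudef}, $\tau$ takes values in $so(H_x)$, i.e.\ $A = \tau(\omega_x\cdot a)/2$ is skew-symmetric with respect to $g^{TM}$. In particular $\tr(A)=0$, so the exponential prefactor is $1$, and
\[
\det J(x,a) \;=\; \det\!\left(\frac{\sinh(A/2)}{A/2}\right) \;=\; j_H\!\bigl(\tau(\omega_x\cdot a)/2\bigr),
\]
which is the claim.

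There is really no obstacle here beyond recognizing the skew-symmetry of $\tau(\omega_x\cdot a)$, which forces the asymmetric-looking factor $(1-e^{-A})/A$ produced by the Jacobi-field analysis to agree with the symmetric determinantal expression $j_H$. Everything else is functional calculus on a single endomorphism of $H_x$.
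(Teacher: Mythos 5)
Your proof is correct, and it is the standard derivation — essentially the one in \cite[Corollary 5.5]{BGV}, which the paper cites without rederiving. The block-diagonality of $J(x,a)$ reduces the claim to the horizontal block, where the identity $\frac{1-e^{-A}}{A}=e^{-A/2}\,\frac{\sinh(A/2)}{A/2}$ together with the skew-symmetry of $\tau(\omega_x\cdot a)$ (hence $\det e^{-A/2}=e^{-\tr(A)/2}=1$) gives $\det J(x,a)=j_H\bigl(\tau(\omega_x\cdot a)/2\bigr)$.
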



\section{\label{PFX} Proof of Theorem \ref{ASYMX} }

In this section we prove Theorem \ref{ASYMX} for the heat kernels
associated to the  Bochner Laplacians
by $\Delta^{L^p} = \nabla_p^* \nabla_p$ on $L^p$  where $*$ is
taken with respect to $dv_{M}$.
 As discussed in the introduction,
we use \eqref{FC} to  analyze  heat kernels on $L^p$ by
identifying them as Fourier coefficients of heat kernels
on $X$.

 There is a natural lift of sections of $L^p$ to equivariant functions
 on $X$.  Under the
identification $L^2(M, L^p) \simeq L^2_p(X)$, $\Delta^{L^p}$
corresponds to the horizontal
Laplacian $\Delta_H = d_H^* d_H$, where $d_H$ is
the horizontal differential on $X$. We then add the
vertical Laplacian to define the (Kaluza-Klein) metric Laplacian
$\Delta^X = (\frac{\partial}{\partial \theta})^2 + \Delta_H$
on $X$.

Since
 $[(\frac{\partial}{\partial \theta})^2, \Delta_H] = 0$ we have
\begin{equation} e^{- u \Delta^{L^p}}
    = e^{u (\frac{\partial}{\partial \theta})^2}
e^{- u \Delta^X}. \end{equation}
This shows that the equation \eqref{FC} is correct, i.\,e.,
if $\pi(x) = z$,
\begin{equation}\label{FCb}
    \exp\big(-\frac{u}{p} \Delta^{L^p}\big)(z,z)
    = e^{u p} \int_{S^1}e^{-(u/p) \Delta^X} (e^{i \theta} x, x)
    e^{- i p \theta} d\theta.
\end{equation}

\begin{rem} The purpose of adding the vertical term
$(\frac{\partial}{\partial \theta})^2$ is
that there exists a simple parametrix for $e^{- u \Delta^X}$.
Without adding the vertical term, the horizontal heat kernel is
much more complicated and reflects
the degeneracies of the horizontal
curvature. The model case of
the Heisenberg sub-Laplacian only applies directly when
the line bundle is positive.

The Fourier formula \eqref{FCb} shows that from a spectral point
of view, the addition of  $(\frac{\partial}{\partial \theta})^2$
is harmless. But from a Brownian motion point of view it is drastic
and it is responsible for the necessity of analytically
continuing the heat kernel to $L^*$.  For further remarks see
\S \ref{REMSECT}.
\end{rem}

\subsection{\label{PARAMETRIX} Parametrices for heat kernels
}

We will use the   construction in \cite{BGV} of a parametrix for
the heat kernel on a principal $S^1$
bundle $\pi: X \to M$. For remainder estimates, we use
the off-diagonal estimates of Kannai \cite{K}, which
apply to these and more general heat kernel constructions.

To get oriented, let us review the general
Hadamard (-Minakshisundararam-Pleijel) parametrix construction
on a general Riemannian manifold.
Let $(Y,g)$ denote a complete Riemannian manifold of
dimension $m$ and let $\Delta_g$ denote its (positive) Laplacian.
For $x$ close enough to $y$,  the heat kernel of
$e^{- u \Delta_g }$ has an asymptotic expansion as
$u \to 0$ of the form
\begin{equation}
    e^{-u \Delta_g }(x, y) \sim \frac{1}{(4 \pi u)^{m/2}}\,
e^{-d(x,y)^2/4u} \sum_{j = 0}^{\infty} v_j(x, y) u^j \end{equation}
where $d(x,y)$ denotes the distance between $x$ and $y$ and
where $v_j$ are coefficients satisfying certain well-known
transport equations. More precisely, choose a cut-off function
$\psi(d(x,y)^2)$, equal one in a neighborhood $U$ of the
diagonal.  Then there exist smooth locally defined functions
$v_j(x,y)$ such that
\begin{equation} H_M(u, x, y):=
    \psi(d(x,y)^2) \frac{1}{(4 \pi u)^{m/2}}\, e^{-d(x,y)^2/4u}
    \sum_{j = 0}^{M}
v_j(x, y) u^j, \end{equation}
is a parametrix for $ e^{-u \Delta_g }(x, y)$, i.\,e.,  in $U$
we have
\begin{equation} \label{para}
\Big(\frac{\partial}{\partial u} + \Delta_g\Big) H_M(u, x, y) =
\psi(d(x,y)^2) \frac{1}{(4 \pi u)^{m/2}} \,e^{-d(x,y)^2/4u}
\Delta_g v_M(x, y) u^M.\end{equation}
It follows by the off-diagonal estimates of
Kannai (\cite[(3.11)]{K}) that
\begin{equation}\label{NEAR}
    e^{- u \Delta_g}(x, y) - H_M(u, x, y) = O(u^{M - m /2})\,
    e^{- d(x, y)^2/4u}  \end{equation}
for $(u, x, y)$ satisfying: $u < \operatorname{inj}(x)$
and $y \in B(x,\operatorname{inj}(x))$.
Here, $\operatorname{inj}(x)$ is the injectivity
radius at $x$ and $B(x,r)$ is the geodesic ball of radius $r$
centered at $x$.

\subsection{Heat kernels on $S^1$ bundles and $\R$ bundles}
We apply the heat kernel parametrix construction to the principal
$S^1$ bundle
$X = X_h \to M$ equipped with the Kaluza-Klein metric.
As a special case where
$M = \mbox{pt}$, this gives a heat kernel parametrix on the circle
$S^1$. It is standard to
express the heat kernel on $S^1$ as the projection over $\Z$
of the heat kernel on $\R$, since
the distance squared is globally defined on $\R$ but not on $S^1$.
It  thus simplies
 the analysis  of the distance function to consider the principal
$\R$ bundle $\tilde{\pi}: \widetilde{X} \to M$, where
$\widetilde{X}$ is the fiberwise universal cover of $X$.
We thus express the heat kernel
on $X$ as the projection to $X$ of the heat kernel
$e^{-u \Delta^{\widetilde{X}}}(\widetilde{x}+ i\theta, \widetilde{x})$ on
$\widetilde{X}$, where $\widetilde{X}$ is equipped with the
Kaluza-Klein metric, so that
$p: \widetilde{X} \to X$ is a Riemannian $\Z$-cover.
We use additive notation for the $\R$ action on the  fiber
of $\tilde{X}$, i.\,e., in place of the $S^1$ action
$e^{i \theta}  \cdot x$ on $X$ we write
$\widetilde{x} + i \theta$ on $\widetilde{X}.$

A key point in the formula \eqref{FCb} is that in
Theorem \ref{ASYMX} we only
use the heat kernel at points $(x, y)$ where $\pi(x) = \pi(y)$.
The same is true
when we lift to $\widetilde{X}$. Although we need to construct
a heat kernel parametrix off the diagonal, it is only evaluated at
such off-diagonal points. Hence
it is sufficient to use a base cut-off, of the form
$\psi(d_M(x, y)^2)$ where as above,
$d_M(x, y)$ is the distance on $M$ from $\pi(x)$ to $\pi(y)$.
This cutoff is identically equal to one on points
$(x,y)$ on the same fiber.

 The  following proposition is adapted from
 \cite[ Theorems 2.30]{BGV}.

\begin{prop} \label{RLIFT} There exist smooth functions
    $\widetilde{\Phi}_\ell$ on
    $\widetilde{X} \times i \R$ such that
$$e^{-u \Delta^{\widetilde{X}}}
(\widetilde{x}, \widetilde{ x}+ i \theta) =  (4 \pi u)^{- (n + 1/2)}
\sum_{\ell = 0}^{M} u^\ell
\widetilde{\Phi}_\ell(\widetilde{x}, i \theta)
\widetilde{j} (\widetilde{x}, i \theta)^{-1/2}
 e^{-|\theta|^2/4u} + R_M(\widetilde{x}, i \theta)  $$
where $\widetilde{j} (\widetilde{x}, i \theta)$ is the volume
density $j(\widetilde{x}, \widetilde{y})$ at
$\widetilde{y} = \widetilde{x} + i \theta$
 in normal coordinates centered at $\widetilde{x}$ and where

$$\begin{array}{l} \widetilde{ \Phi_0}(\widetilde{x}, i \theta) = 1\,,
\ \\
R_M(\widetilde{x}, i \theta) \ll (4 \pi u)^{- (n + 1/2 + M)}
e^{-|\theta|^2/4u}\,. \end{array}$$

\end{prop}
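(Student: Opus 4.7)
My plan is to apply the standard Hadamard--Minakshisundaram--Pleijel parametrix construction to the Riemannian manifold $(\widetilde{X}, G)$, which has real dimension $2n+1$, and then restrict to fiber-diagonal points $(\widetilde{x}, \widetilde{x}+i\theta)$. Following BGV, Chapter~2, in a normal-coordinate neighborhood of the diagonal there exist smooth coefficients $\widetilde{\Phi}_\ell(\widetilde{x}, \widetilde{y})$, determined recursively by the transport equations along geodesics emanating from $\widetilde{x}$, with the initial condition $\widetilde{\Phi}_0(\widetilde{x}, \widetilde{x}) = 1$ (forced by $\lim_{u\to 0^+} e^{-u\Delta^{\widetilde{X}}} = \operatorname{Id}$), such that
\[
H_M(u, \widetilde{x}, \widetilde{y}) := (4\pi u)^{-(n+1/2)}\, j(\widetilde{x},\widetilde{y})^{-1/2}\, e^{-d(\widetilde{x},\widetilde{y})^2/4u}\sum_{\ell=0}^{M} u^\ell \widetilde{\Phi}_\ell(\widetilde{x},\widetilde{y})
\]
satisfies the heat equation up to an error of the type \eqref{para}, the prefactor $(4\pi u)^{-(n+1/2)}$ coming from $\dim_{\R}\widetilde{X} = 2n+1$.

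Next I would specialize to $\widetilde{y} = \widetilde{x} + i\theta$. Since the Kaluza--Klein metric $G$ is $\R$-invariant and $\partial/\partial\theta$ has unit length, each fiber of $\tilde\pi:\widetilde{X}\to M$ is a complete geodesic and $\exp_{\widetilde{x}}(\theta\,\partial/\partial\theta) = \widetilde{x} + i\theta$ for all $\theta\in\R$. Applying Lemma~\ref{Dist} with $\pi(\widetilde{x}) = \pi(\widetilde{y})$ gives $d(\widetilde{x}, \widetilde{x}+i\theta)^2 = \theta^2$, which produces the Gaussian factor $e^{-|\theta|^2/4u}$. The transport equations therefore propagate $\widetilde{\Phi}_\ell(\widetilde{x}, i\theta)$ and the Jacobian $\widetilde{j}(\widetilde{x}, i\theta) = \det J(\widetilde{x}, \theta\,\partial/\partial\theta)$ smoothly to all of $\widetilde{X}\times i\R$, with $\widetilde{\Phi}_0 \equiv 1$, as required.

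For the remainder, I would first invoke Kannai's off-diagonal estimate \eqref{NEAR} on the region where $|\theta|$ is less than the injectivity radius along the fiber, yielding the bound $R_M \ll (4\pi u)^{-(n+1/2+M)} e^{-|\theta|^2/4u}$ there. To extend this to all $\theta\in\R$, I would use the $\R$-invariance of $\Delta^{\widetilde{X}}$ together with standard Gaussian upper bounds (Li--Yau type) for heat kernels on complete Riemannian manifolds of bounded geometry, so that both $e^{-u\Delta^{\widetilde{X}}}(\widetilde{x}, \widetilde{x}+i\theta)$ and each term of $H_M$ are individually $O(u^{-(n+1/2)}e^{-c|\theta|^2/u})$, which absorbs into the stated remainder form after adjusting constants. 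Alternatively, one may note that the Kaluza--Klein product-like structure along the fiber lets one write $e^{-u\Delta^{\widetilde{X}}}$ as a convolution with the one-dimensional Gaussian in the vertical direction, making the large-$\theta$ behavior explicit.

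The main obstacle will be the remainder estimate for $|\theta|$ outside a fixed injectivity neighborhood: Kannai's local estimate does not directly apply, and the point is that the $\R$-invariance of $G$ together with the completeness of fibers forces the heat kernel to retain the Gaussian profile in $\theta$ globally. Once that is handled, the proposition reduces to bookkeeping in the BGV parametrix construction specialized to the fiber diagonal. I would omit the recursive computation of the higher coefficients $\widetilde{\Phi}_\ell$ for $\ell\geq 1$, since only $\widetilde{\Phi}_0 = 1$ is asserted explicitly and the remaining coefficients are those of the generic construction.
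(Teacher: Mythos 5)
Your construction coincides with the paper's in the core near-diagonal step: invoke the Hadamard parametrix of BGV on $(\widetilde{X},G)$, note $\dim_{\R}\widetilde{X}=2n+1$ to get the $(4\pi u)^{-(n+1/2)}$ prefactor, use the fiber formula $d(\widetilde{x},\widetilde{x}+i\theta)^2=\theta^2$, define $\widetilde{\Phi}_\ell$ by restricting the transport-equation solutions to the fiber diagonal, and feed the difference into Kannai's off-diagonal estimate. That much is fine.

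The divergence — and the gap — is in how you treat the cutoff and the large-$|\theta|$ regime. You set up the parametrix ``in a normal-coordinate neighborhood of the diagonal,'' i.e.\ with a full-distance cutoff $\psi(d(\widetilde{x},\widetilde{y})^2)$. The paper deliberately does not do this: immediately before the proposition it observes that the kernel is only ever evaluated at points with $\tilde\pi(\widetilde{x})=\tilde\pi(\widetilde{y})$, and therefore chooses a \emph{base} cutoff $\psi(d_M(\widetilde{x},\widetilde{y})^2)$, which is identically $1$ on the whole fiber diagonal. With your cutoff the parametrix $H_M$ vanishes once $|\theta|$ exceeds the support radius, so ``$R_M$'' there is simply the heat kernel itself, and you must produce the bound $\lesssim u^{-(n+1/2)-M}e^{-\theta^2/4u}$ from scratch. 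The Li--Yau (or Aronson-type) Gaussian upper bounds you propose give $Cu^{-(n+1/2)}e^{-c\theta^2/u}$ for some universal $c>0$ which is generically strictly less than $1/4$; for large $|\theta|$ that is \emph{weaker} than $e^{-\theta^2/4u}$, so ``adjusting constants'' does not absorb it into the claimed remainder. You correctly flag this as the main obstacle, but the proposed fix does not close it, whereas the base-cutoff device sidesteps it by keeping the parametrix alive on the fiber and letting the parametrix error, not the raw heat kernel, carry the Gaussian weight. (Your ``alternative'' — writing $e^{-u\Delta^{\widetilde X}}$ as a fiber convolution of a $1$-D Gaussian with $e^{-u\Delta_H}$ — is closer in spirit to what the paper exploits elsewhere, but then you have traded the problem for estimates on the horizontal heat kernel, which is exactly what the whole device of adding $(\partial/\partial\theta)^2$ was meant to avoid.)

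A secondary point: the assertion that the transport equations ``propagate $\widetilde\Phi_\ell$ and $\widetilde j$ smoothly to all of $\widetilde{X}\times i\R$'' needs care. By Proposition~\ref{JDEF} the fiber Jacobian is $\det J(\widetilde{x},\theta\,\partial_\theta)=j_H(\theta\,\tau(\omega_x)/2)$ with $j_H(A)=\det(\sinh(A/2)/(A/2))$, and since $\tau(\omega_x)$ is skew-symmetric this is a product of factors $\sin(\alpha_j\theta/2)/(\alpha_j\theta/2)$, which can vanish at finite $\theta$ when $\omega_x\neq 0$. Past such conjugate points $\widetilde j^{-1/2}$ and the recursion $B=j^{1/2}\Delta j^{-1/2}$ are ill-defined, so the parametrix coefficients do not automatically extend smoothly to all $\theta\in\R$; if you need the formula beyond the first conjugate time you must say why that range suffices for the intended application (it does, because the Poincar\'e sum in Proposition~\ref{AUTO} is dominated for large $|\theta+n|$ by the decaying Gaussian, but this has to be argued).
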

\begin{proof}  By \cite[Theorem\,2.30]{BGV}, and
    by \cite[(3.11)]{K}, there exist smooth $v_\ell(\widetilde{x},
\widetilde{y})$ such that in $U$,
\begin{equation}\label{Hvj} \begin{array}{l}
    e^{-u \Delta^{\widetilde{X}}}(\widetilde{x}, \widetilde{y}) =
    H_M(\widetilde{x}, \widetilde{y})
    + R_M(\widetilde{x}, \widetilde{y}),\;\;\; \mbox{where}\\ \\
    H_M(\widetilde{x}, \widetilde{y}) = (4 \pi u)^{- (n + 1/2)}
\psi(d_M(\widetilde{x}, \widetilde{y})^2) e^{-d(\widetilde{x},
    \widetilde{y})^2/4u}
\sum_{\ell = 0}^{M} u^\ell v_\ell(\widetilde{x}, \widetilde{y})
j(\widetilde{x}, \widetilde{y})^{-1/2}
  \end{array} \end{equation}
with $R_M(\widetilde{x}, \widetilde{y}) \ll  (4 \pi u)^{- (n + 1/2)}
e^{-d(\widetilde{x}, \widetilde{y})^2/4u} u^{M + 1}$.
The $v_\ell$ solve the transport equations,
$$ v_\ell(\widetilde{x}, \widetilde{y}) = - \int_0^1 s^{\ell - 1}
(B_{\widetilde{x}} v_{\ell - 1})(\widetilde{x}_s, \widetilde{y}) ds.$$
Here, $B = j^{\half} \circ \Delta^{\widetilde{X}} j^{- \half}.$
Now put $\widetilde{y} = \widetilde{x} +i \theta$ and put
$\Phi_\ell(\widetilde{x}, i \theta)
= v_\ell(\widetilde{x}, \widetilde{x} + i \theta)$.
The stated estimate follows from the off-diagonal estimates
\eqref{NEAR} of \cite{K}.
 \end{proof}
We now project the heat kernel on $\widetilde{X}$ to $X$ to get:

\begin{prop} \label{AUTO} The heat kernel on $X$ is given by
\begin{equation}
e^{- u \Delta^X}(x, e^{i \theta} x) = \sum_{n \in \Z}
e^{- u \Delta^{\widetilde{X}}}(\widetilde{x}, \widetilde{x}
+ i\theta + i n)\,.
\end{equation}
Here, $p(\tilde{x}) = x$. Moreover,

 \begin{equation}\label{DOWN} \begin{array}{l}
     e^{-u \Delta^{X}}(x, e^{i \theta}x) =
     H_M(u, x, e^{i \theta} x) + R_M(x, \theta ), \;\;
     \mbox{where}\\ \\
     H_M(u, x, e^{i \theta} x) = (4 \pi u)^{- (n + 1/2)}\times\\ \\
 \Big[\sum_{\ell = 0}^{M}
\sum_{n \in \Z}
e^{- (\theta + n)^2/4u}  u^\ell
\widetilde{\Phi}_\ell(\tilde{x},  i\theta + i n)
\widetilde{j} (\tilde{x}, i\theta + in)^{-1/2}\Big]\,,
\end{array} \end{equation}
and where for $\tilde{x}$ with $\pi(\tilde{x}) = x$,
$$
R_M(\tilde{x}, i \theta) \ll (4 \pi u)^{- (n + 1/2 + M)}
\sum_{n \in \Z} e^{-|\theta + n|^2/4u}. $$

\end{prop}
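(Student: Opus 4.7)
The plan is to use the classical method of images for heat kernels on Riemannian covers and then to insert the asymptotic expansion of Proposition \ref{RLIFT}. Since $p:\widetilde{X}\to X$ is a normal Riemannian cover with deck group $\Z$ acting by the translations $\widetilde{x}\mapsto \widetilde{x}+in$, the lifted Laplacian $\Delta^{\widetilde{X}}$ coincides with the pullback of $\Delta^X$ (the Kaluza-Klein metric on $\widetilde X$ is the lift of the one on $X$, and the $\R$-action is by isometries). Consequently, for any $\widetilde{x}$ with $p(\widetilde{x})=x$, the function
\[
K(u,x,y):=\sum_{n\in\Z}e^{-u\Delta^{\widetilde{X}}}(\widetilde{x},\widetilde{y}+in),\qquad p(\widetilde{y})=y,
\]
is $\Z$-invariant in the second argument, satisfies the heat equation in $(u,y)$, and has the correct $\delta$-initial data on $X$. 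By uniqueness of the heat kernel on the compact-fibered manifold $X$ (which follows from the usual $L^2$ theory once absolute convergence is established), this yields the first identity with $y=e^{i\theta}x$ corresponding to $\widetilde{y}=\widetilde{x}+i\theta$.

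To justify convergence and to pass to the asymptotic form, I would fix a small relatively compact neighborhood of the diagonal in which Proposition \ref{RLIFT} applies and observe that by Lemma \ref{Dist} one has $d(\widetilde{x},\widetilde{x}+i(\theta+n))^2=(\theta+n)^2$ (the base points coincide, so the horizontal distance vanishes). Thus the Gaussian factor in the parametrix becomes $e^{-(\theta+n)^2/4u}$, which is summable over $n\in\Z$ uniformly in $(u,x,\theta)$ on compact subsets of $(0,T]\times X\times S^1$; the same Gaussian bound controls the tail for $|n|$ large. Substituting the expansion of Proposition \ref{RLIFT} term by term therefore yields the expansion \eqref{DOWN}.

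The remainder estimate follows from the pointwise bound
\[
R_M(\widetilde{x},i(\theta+n))\ll(4\pi u)^{-(n+1/2+M)}\,e^{-(\theta+n)^2/4u},
\]
summed over $n\in\Z$, which gives exactly the stated bound $(4\pi u)^{-(n+1/2+M)}\sum_{n\in\Z}e^{-|\theta+n|^2/4u}$. The only nontrivial point in the argument is to verify that the error terms produced for each $n$ can be combined into a single uniform remainder; this is routine because the Kannai off-diagonal estimate \eqref{NEAR} of \cite{K} is translation-invariant in the $\R$-direction, so the constant implicit in $\ll$ is independent of $n$.

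The main obstacle, modest as it is, is ensuring uniformity of the parametrix construction in the fiber direction: one must check that the cutoff $\psi(d_M(\cdot,\cdot)^2)$ (a base cutoff, hence $\R$-invariant) together with the translation invariance of $\Delta^{\widetilde{X}}$ in the $\theta$ coordinate allow the parametrix to be applied simultaneously at every $\widetilde{y}=\widetilde{x}+i(\theta+n)$, producing a single geometric series whose convergence is controlled by the Gaussian decay in $|\theta+n|$.
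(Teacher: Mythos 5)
Your proof is correct and takes essentially the same approach as the paper's, which simply notes that $\widetilde{X}\to X$ is a local isometry so the heat kernel and parametrix on $X$ are Poincar\'e series in those on $\widetilde{X}$. Your version spells out the convergence and uniformity details that the paper's one-line proof leaves implicit, but the underlying argument (method of images over the $\Z$-cover, Gaussian decay via Lemma \ref{Dist}, Kannai's translation-invariant off-diagonal bound) is the same.
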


\begin{proof} Both statements hold because $\tilde{X} \to X$ is
    locally isometric,
and therefore the heat kernel and parametrix on $X$ are
Poincar\'e series in those on $\widetilde{X}$.
\end{proof}

\subsection{Stationary phase calculation of the asymptotics}

We now  use the heat kernel parametrix \eqref{DOWN}
to calculate the scaled heat kernel asymptotics by the stationary
phase method.  Our calculation of the coefficients is based on
Theorem 5.8 of \cite{BGV}. We therefore rewrite Proposition
\ref{AUTO} in the form stated there.

\begin{rem} The notation in \cite{BGV} for the
    `Hadamard' coefficients $\Phi_j$ is somewhat different in their
Theorems 2.26 and 5.8. In the latter, the volume half-density
factor in the $q_t(x, y)$ factor in \cite[Theorem 2.26]{BGV}
is absorbed into the $\Phi_j$ of \cite[Theorem 5.8]{BGV}, and therefore
$\Phi_0(y,y)$ changes from $I$ to $\det^{-\half} (J(x,a))$
in \cite[Theorem 5.8]{BGV}.  Since we are using their computation of the
heat kernel expansion, we follow their notational
conventions. \end{rem}

We thus rewrite Proposition \ref{AUTO} in the form of
of \cite[Theorem 5.8]{BGV} and combine with  \eqref{FCb} to obtain,
\begin{equation} \label{BGV} \exp(- t \nabla_p^* \nabla_p)(z, z)
    \sim \Big(\frac{p}{4 \pi u}\Big)^{\!n + \frac12} e^{p u}
\int_{\R}^{asympt}
\sum_{j = 0}^{\infty} p^{-j} u^j \Phi_j(x,  i\theta)
e^{-p \theta^2/4u} e^{- i p \theta}   d\theta, \end{equation}
where (cf. \eqref{JDEF})
\begin{equation}\label{PHI0}
    \Phi_0(x, a) =( \det \;  J(x, a))^{-\half} \end{equation}
and where $\int_{\R}^{asympt}$ is the notation of  \cite[(5.5)]{BGV}
for the asymptotic expansion of the integral.
The  integral is an oscillatory integral with complex phase
\begin{equation} \label{phase}
    - |\theta|^2/4u - i \theta,\end{equation}
with  a single non-degenerate critical point at
$  \theta = - 2 u i $
with constant Hessian.  We would like to apply the method
of stationary phase to the integral,
but  the critical point is complex and in particular does
not lie in the contour of integration. This is not surprising:
the integral must be exponentially decaying
to balance the factor of $e^{p u} $ in front of it. Therefore,
we must deform the contour to $|z| = 2t.$ However, then we no
longer have the heat kernel in the real domain,
but rather the analytic continuation of the heat kernel of
$e^{- t \Delta^X}$  in the fiber direction.  Thus we first
need to discuss the analytic continuation of the heat kernel
in the fiber.

 \subsection{Analytic continuation of heat kernels}

In this section, we  analyze  the analytic continuation of the kernel
$e^{-u \Delta^X}(e^{i \theta} x, x)$ and its Hadamard parametrix
in the $e^{i \theta}$ variable
from $S^1 \times X$ to $\C^* \times X.$ Despite the fact that
the metric $h^L$ is only $\cC^{\infty}$
and not real analytic, the heat kernel always has an analytic
continuation in the variable $e^{i \theta}$,
as the next Proposition shows.

  The $S^1$ action $e^{i \theta} \cdot x$ on $X$ extends
  to a holomorphic
action of $\C^*$ on $L^*$ which we denote by $e^z \cdot \mu$
for $\mu \in L^*$. When
$\mu = x \in X$ we denote it more simply by $e^z \cdot x.$
We also write $z = t + i \theta$ with $t = 2u$
when the heat kernel is at time $u$.

\begin{prop} \label{BGVprop} The kernel
    $e(u, x, \theta): = e^{- u \Delta^X} (x, e^{i \theta} x)$
on $X \times {\bf g}$ extends for each $(u, x) \in \R^+ \times X$
to an entire function
$e(u, x, z): =  e^{- u \Delta^X} (x, e^{z} x)$ on ${\bf g}_{\C},$
with $ {\bf g}= {\rm Lie}(S^1)\simeq \mathbb{R}$.
\end{prop}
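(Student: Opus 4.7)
The plan is to use the $S^1$-symmetry of $\Delta^X$ to expand the heat kernel as a Fourier series in $\theta$ whose coefficients decay as $e^{-up^2}$ in the Fourier index $p$, and then substitute $i\theta \mapsto z$ to obtain an entire function of $z$. The Gaussian decay in $p$ is precisely what makes the continued series an entire function, and it comes from the vertical term added to the horizontal Laplacian in the definition of $\Delta^X$.

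First I would diagonalize. The vector field $\partial/\partial\theta$ generating the $S^1$-action is Killing for the Kaluza--Klein metric, hence commutes with $\Delta^X$, and $L^2(X)$ decomposes into isotypes $L^2(X) = \bigoplus_{p\in\Z} L^2_p(X)$. Under the lifting identification $L^2_p(X) \simeq L^2(M, L^p)$ one has $\Delta^X|_{L^2_p(X)} = p^2 + \Delta^{L^p}$. Expanding in a joint orthonormal eigenbasis $\{\phi_{k,p}\}$ of $\Delta^X$ (eigenvalue $p^2+\mu_{k,p}$) and $-i\partial/\partial\theta$ (eigenvalue $p$) gives
\begin{equation*}
e^{-u\Delta^X}(x, e^{i\theta} x) \;=\; \sum_{p\in\Z} e^{-up^2}\, e^{-ip\theta}\, B_p(u,x),
\end{equation*}
where $B_p(u,x) = \sum_k e^{-u\mu_{k,p}} |\phi_{k,p}(x)|^2 = e^{-u\Delta^{L^p}}(\pi(x),\pi(x))\ge 0$ (viewed on $X$ via the equivariant lift). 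The candidate analytic extension is then obtained by the formal substitution $i\theta \mapsto z$:
\begin{equation*}
e(u,x,z) \;:=\; \sum_{p\in\Z} e^{-up^2 - pz}\, B_p(u,x), \qquad z\in\C,
\end{equation*}
which agrees with the original kernel on the imaginary axis $z=i\theta$.

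To verify that this series defines an entire function I would establish a uniform bound on the Fourier coefficients. The inversion formula $B_p(u,x) = \int_{S^1} e^{-u\Delta_H}(e^{i\theta}x,x)\, e^{-ip\theta}\,d\theta$ together with continuity and boundedness of $e^{-u\Delta_H}(\cdot,\cdot)$ on the compact manifold $X\times X$ gives $|B_p(u,x)| \le C(u,x)$ uniformly in $p\in\Z$. For $|z|\le R$ one then has $|e^{-up^2-pz}| \le e^{-up^2 + R|p|}$, summable uniformly in $z$. Hence the series converges normally on compact subsets of $\C$ and defines an entire function of $z$ for each fixed $u>0$ and $x\in X$; this is the claimed extension $e(u,x,z) = e^{-u\Delta^X}(x, e^z x)$, with the right-hand side at non-imaginary $z$ defined through the holomorphic $\C^*$-action on $L^*$.

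The main conceptual obstacle is that $h^L$ is only smooth, so one cannot simply invoke real-analytic elliptic regularity to produce the extension. The resolution is structural: adding the vertical term to the horizontal Laplacian supplies the Gaussian factor $e^{-up^2}$ in the Fourier coefficients, and this alone forces normal convergence everywhere in $\C$. This is exactly why one works with $\Delta^X$ rather than $\Delta_H$ in the Fourier identity \eqref{FCb}, and why the heat kernel of the Kaluza--Klein Laplacian extends to an entire function of the fiber parameter regardless of the regularity of the underlying geometry.
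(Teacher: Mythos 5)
Your argument is correct and essentially identical to the paper's proof of Proposition \ref{BGVprop}: both diagonalize $\Delta^X$ using the commutation $[\Delta_H,(\partial/\partial\theta)^2]=0$ to write the heat kernel on the fiber as a Fourier series $\sum_p e^{-up^2}\,e^{\pm ip\theta}\,e^{-u\Delta^{L^p}}(\pi(x),\pi(x))$, bound the coefficients uniformly in $p$ by the continuous function $e_0(u,x)=\int_{S^1}e^{-u\Delta_H}(x,e^{i\theta}x)\,d\theta$, and then note that the Gaussian factor $e^{-up^2}$ makes the series normally convergent on compacta after the substitution $i\theta\mapsto z$. The only cosmetic difference is a sign convention in the Fourier expansion, which does not affect the argument.
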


\begin{proof}  This is most easily seen using the
    Fourier/eigenfunction expansion of the heat kernel,
\begin{equation}
    e^{- u \Delta^X}(x,y) = \sum_{p \in \Z} \sum_{j = 1}^{\infty}
 e^{- \lambda_{p j} u} \phi_{p j}(x) \bar{\phi}_{p j}(y).
\end{equation}
Thus
\begin{equation} e(u, x, \theta)
    =  e^{- u \Delta^X}(x, e^{i \theta} x) =
 \sum_{p \in \Z}  \sum_{j = 1}^{\infty}e^{i p \theta}
 e^{- \lambda_{p j} u} |\phi_{p j}(x)|^2,
\end{equation}
hence the analytic continuation must be given by
\begin{equation} e^{- u \Delta^X}(x, e^z x) =  \sum_{p \in \Z}
 \sum_{j = 1}^{\infty} e^{p z}
 e^{- \lambda_{p j} u} |\phi_{p j}(x)|^2.  \end{equation}
The only question is whether the sum convergences uniformly
to a holomorphic function
of $z$.  As above, we write
$\Delta^X = \Delta_H + \frac{\partial^2}{\partial \theta^2}$.
Since $\big[\Delta_H ,\frac{\partial^2}{\partial \theta^2}\big] = 0$,  the eigenvalues
have the form
$\lambda_{p j} = p^2 + \mu_{p j}$ with $\{\mu_{p j}\}
\subset \R^+$ the spectrum
of the horizontal Laplacian $\Delta_H$ on the space
$L^2_p(X)$. 
Thus
\begin{equation} e^{- u \Delta^X}(x, e^z x) =  \sum_{p \in \Z}
  e^{p z} e^{- p^2 u} e^{- u L^p}( x, x)\,,  \end{equation}
where $e^{- u L^p}( x, y):= \int_{S^1} e^{- i p \theta}
e^{- u \Delta_H}(x, e^{i \theta} x) d\theta$.  But
$$|e^{- u L^p}( x, x)| = \Big|\int_{S^1} e^{- i p \theta}
e^{- u \Delta_H}(x, e^{i \theta} x) d\theta\Big|
\leq \int_{S^1}  e^{- u \Delta_H}(x, e^{i \theta} x) d\theta
= e_0(u,x)\,,$$
where $e_0(u,x)$ is a continuous function of $(u,x).$
The proposition follows from the fact that
$\sum_{p \in \Z}e^{p z} e^{- p^2 u}$
convergences uniformly on compact sets in $|z|$.
\end{proof}

\subsection{Analytic continuation of parametrices}

Next we consider the analytic continuation of the parametrix.
 We first observe that the connection $\beta$  extends
to $L^*$ by the requirement that  it  be $\C^*$ invariant. Thus,
$T_{\ell} L^* = H_{\ell} \oplus V_{\ell}$ where
$V_{\ell} = \C \frac{\partial}{\partial \lambda}$ where
$\lambda \cdot x$ denotes the $\C^*$ action. Using the metric
$G$ we may identify $T X$ with $T^*X$ and similarly decompose
$T^*X$ and $T^*L^*$ into horizontal and vertical spaces.
The  vertical
space $V_{\ell}^*$ is spanned by $ \alpha_{\ell}.$
\begin{prop}

The fiber distance squared function
$d^2(\tilde{x}, \tilde{x} + i \theta)$ admits an analytic extension
in $\theta$ to $\C$ satisfying
\begin{equation}
d(\widetilde{x}, \widetilde{x} + i \theta + i \lambda)^2
= (i\theta + i \lambda)^2. \end{equation}
Moreover, the Hadamard coefficients
$v_j(\tilde{x},  \tilde{x} + i\theta)$  \eqref{Hvj} admit holomorphic
extensions to $i\theta + i \lambda$.

 \end{prop}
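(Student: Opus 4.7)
The plan is to establish the two claims separately. The first is an immediate consequence of Lemma~\ref{Dist}: since $\pi(\tilde{x}) = \pi(\tilde{x} + i\theta + i\lambda)$, the horizontal distance vanishes, and the Pythagorean identity yields $d(\tilde{x}, \tilde{x} + i\theta + i\lambda)^2 = (i\theta + i\lambda)^2$, a polynomial in the fiber parameter that trivially extends to an entire function of $\theta + \lambda \in \C$.

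For the holomorphic extension of the Hadamard coefficients $v_j$, I would proceed in two steps. The first step is to verify that the volume density along the fiber is entire in $\theta$. By the corollary to Proposition~\ref{JDEF}, taking $a = \theta \partial_\theta \in V_{\tilde x}$,
\[
j(\tilde x, \tilde x + i\theta) = j_H\!\left( \tau(\omega_{\tilde x} \cdot \theta \partial_\theta)/2 \right), \qquad j_H(A) = \det\!\left( \sinh(A/2)/(A/2) \right).
\]
Since $\theta \mapsto \omega_{\tilde x} \cdot \theta \partial_\theta$ is linear and $j_H$ is entire in its matrix argument, both $j(\tilde x, \tilde x + i\theta)$ and (on any connected domain where $j\neq 0$) $j^{\pm 1/2}(\tilde x, \tilde x + i\theta)$ extend to entire functions of $\theta$. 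The second step is an induction on $j$ using the transport equations
\[
v_j(\tilde{x}, \tilde{y}) = -\int_0^1 s^{j-1} (B_{\tilde{x}} v_{j-1})(\tilde{x}_s, \tilde{y})\, ds, \qquad B = j^{1/2} \Delta^{\tilde{X}} j^{-1/2},
\]
with $v_0 \equiv 1$. Along the fiber, the geodesic from $\tilde x$ to $\tilde x + i\theta$ is $\tilde x_s = \tilde x + i s\theta$, so the integrand depends on $\theta$ only through $s\theta$ and the endpoint parameter. Because $\R$-translation in the fiber is an isometry of the Kaluza--Klein metric, $\Delta^{\tilde X} = \Delta_H + \partial_\theta^2$ commutes with fiber translation, and therefore preserves the class of functions that are smooth in the base point and holomorphic in the fiber displacement. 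Combined with Step~1, the induction closes and each $v_j(\tilde x, \tilde x + i\theta)$ extends to an entire function of $\theta$, and hence of $\theta + \lambda \in \C$.

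The main obstacle will be verifying the inductive step: one must carefully check that $B = j^{1/2} \Delta^{\tilde X} j^{-1/2}$ preserves fiber-holomorphy when applied to $v_{j-1}(\tilde x, \cdot)$ and evaluated at the geodesic point $\tilde x_s$. The holomorphy of the conjugation factors $j^{\pm 1/2}$ is supplied by Step~1; the vertical operator $\partial_\theta^2$ manifestly preserves holomorphy in $\theta$; and the horizontal Laplacian $\Delta_H$ involves only $\theta$-independent differential operators in the fiber direction, since $\partial_\theta$ is a Killing field. Each of these ingredients acts within the class of fiber-holomorphic functions, closing the induction.
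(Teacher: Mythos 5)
Your proposal is correct and follows essentially the same route as the paper: the distance-squared claim comes immediately from the fiberwise real-analyticity of the metric (your appeal to Lemma~\ref{Dist} and the vanishing of the horizontal component), and the holomorphy of the $v_j$ is established by induction on the transport equation, using that $j^{\pm 1/2}$ is fiber-holomorphic and nonvanishing and that $\Delta^{\widetilde X}=\Delta_H+\partial_\theta^2$ has a fiberwise holomorphic continuation. The only cosmetic difference is that you make the holomorphy of $j$ explicit via the corollary to Proposition~\ref{JDEF} and phrase the continuation of $\Delta^{\widetilde X}$ via translation-invariance along the fiber, whereas the paper phrases it via the $\C^*$-action extension $\Delta_H + (\lambda\partial_\lambda)^2$; these are the same observation.
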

\begin{proof}
As mentioned above,  the holomorphic continuation
of the $S^1$ action is the action of $\C^*$ on $T^* L^*$.
The first statement about the distance function is obvious
since the distance squared function on the fiber is real analytic
(this is why we lifted the heat kernel from $X$ to
$\widetilde{X}$).


The second statement  is also obvious for $v_0 = 1$.
We then prove it for the
higher $v_k$'s inductively, using the formula
\begin{equation}\label{HADA} v_{k + 1}(x, y)  =
 \int_0^1 s^k B_x v_k (x_s, y) ds \end{equation}
The geodesic $x_s$ from $x$ to $y$ stays in the `domain of
holomorphy'. Moreover,
$B_x = j^{\half} \Delta^X j^{- \half}$ so it suffices to
show that $\Delta^X$ admits
a fiberwise holomorphic continuation.
But clearly, the fiber analytic extension of   $\Delta^X$ is
$\Delta_H + (\lambda \frac{\partial}
{\partial \lambda})^2.$  Hence $v_{k+1}(x, e^z y)$
is well-defined and holomorphic in $z$. Since $j \not= 0$
for such $(x,y)$ it possesses a holomorphic
square root and inverse.

\end{proof}
\begin{cor}  \label{ACPAR}The functions $ \Phi_i(x, \theta) $ on
    $\widetilde{X} \times \R$ extend to holomorphic functions
    $\Phi_i(x, z)$ on $X \times \C $.
Since $e^{- u \Delta^X}(x, e^{i \theta} \cdot x)$ and
its parametrices admit analytic
continuations, it follows that the remainder
$R_M(u,x,  e^{i \theta} \cdot x)$ admits
an analytic continuation.\end{cor}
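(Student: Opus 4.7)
The statement naturally splits into two parts: (i) the holomorphic extension of the Hadamard coefficients $\Phi_i$, and (ii) the holomorphic extension of the remainder $R_M$. As the statement itself suggests, (ii) will follow from (i) combined with Proposition \ref{BGVprop}.

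For (i), I would unwind the notational conventions of the Remark preceding \eqref{BGV}: up to the absorbed volume half-density, $\Phi_\ell(\tilde x, i\theta)$ is the product $v_\ell(\tilde x, \tilde x + i\theta)\, \widetilde j(\tilde x, i\theta)^{-1/2}$, where $v_\ell$ is the Hadamard coefficient appearing in \eqref{Hvj}. The preceding proposition already extends each $v_\ell$ holomorphically in $i\theta + i\lambda$. For the factor $\widetilde j^{-1/2}$, I would invoke the explicit formula of the corollary following Proposition \ref{JDEF}: $\det J(x,a) = j_H(\tau(\omega_x \cdot a)/2)$, with $j_H(A) = \det\bigl(\sinh(A/2)/(A/2)\bigr)$ entire in $A$ and nonvanishing in a neighborhood of $0$. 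Hence $\widetilde j^{-1/2}$ depends holomorphically on $a \in \mathbf{g}_{\C}$ near the origin, and in particular on the fiber parameter $\theta \mapsto z$. Taking products yields the desired holomorphic extension of $\Phi_i(x, z)$.

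For (ii), the defining identity
\begin{equation*}
R_M(u, x, e^{i\theta}\cdot x) = e^{-u\Delta^X}(x, e^{i\theta}\cdot x) - H_M(u, x, e^{i\theta}\cdot x)
\end{equation*}
expresses the remainder as the difference of the full heat kernel, which extends entire in $\theta \mapsto z$ by Proposition \ref{BGVprop}, and the parametrix $H_M$. By \eqref{DOWN}, $H_M(u, x, e^{i\theta}\cdot x)$ is a finite sum in $\ell$ of products of $\Phi_\ell(\tilde x, i\theta + in)$, $\widetilde j(\tilde x, i\theta + in)^{-1/2}$, and $e^{-(\theta + n)^2/4u}$, summed over $n \in \Z$. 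By (i) the first two families extend holomorphically in $z$, and $e^{-(z+n)^2/4u}$ is manifestly entire.

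The main technical check, and the only step I expect to require some care, is uniform convergence of the Poincar\'e sum over $n \in \Z$ after the substitution $\theta \mapsto z = \theta + is$. One has $|e^{-(z+n)^2/4u}| = e^{s^2/4u}\, e^{-(\theta + n)^2/4u}$, which decays like a gaussian in $n$ uniformly on compact subsets of $\C$ and dominates any polynomial growth in $n$ of the smooth coefficients $\Phi_\ell$ and $\widetilde j^{-1/2}$ on the compact slice $\{\tilde x\} \times [-R,R] + i\Z$. Therefore the extended series converges uniformly on compacta and defines a holomorphic function of $z$; the holomorphic extension of $R_M$ then follows from the identity above.
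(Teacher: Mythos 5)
Your proof is essentially correct and follows the same route as the paper. The paper presents the corollary as an immediate consequence of the preceding proposition together with the identity $R_M = e^{-u\Delta^X} - H_M$; your write-up makes the implicit steps explicit in a sensible way. The decomposition $\Phi_\ell = v_\ell\,\widetilde j^{-1/2}$ (unwinding the notational remark about absorbing the volume half-density), the use of the preceding proposition for the $v_\ell$, and the observation that $j_H(A)=\det\bigl(\sinh(A/2)/(A/2)\bigr)$ is entire and nonvanishing near the origin for the $\widetilde j^{-1/2}$ factor — all of these are consistent with the paper's one-line argument and with the remark in the proposition's proof that ``$j\neq 0$ for such $(x,y)$, so it possesses a holomorphic square root and inverse.'' The one genuine addition in your proposal is the check that the Poincar\'e sum in \eqref{DOWN} still converges after the substitution $\theta\mapsto z$; the paper sidesteps this by primarily working on $\widetilde X$ in Lemma \ref{REMLEM}, but your gaussian-domination estimate $|e^{-(z+n)^2/4u}| = e^{s^2/4u}e^{-(\theta+n)^2/4u}$ is the right and complete justification of that point.
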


\begin{rem} The analytic continuation of
    $d^2(\tilde{x}, \tilde{y} +i  \theta)$ exists as long as
    $(\pi(\tilde{x}), \pi(\tilde{y}))$
is sufficiently close to the diagonal. Although we do not use
this in Theorem \ref{ASYMX} when
$\pi(\tilde{x}) \not= \pi(\tilde{y})$,
we briefly go through the proof.

First, we can holomorphically continue the geodesic
flowon $S^* X$  in the fiber variable to $S^* L^*$  by observing
that the Hamiltonian continues to
$|\xi_H|^2 + p_{\lambda}^2$ where $p_{\lambda}$ is the
analytic continuation of $p_{\theta}.$ We denote by
$\widetilde{G}^u$ the continuation
of the geodesic flow of $(X, G)$ to $L^*$.

If $x \in X $ and if $\xi = \xi_H + \lambda \beta$ (with
$\lambda \in \C^*)$,  then
$\widetilde{G}^u(x, \xi) = V^u \circ \widetilde{G}_H^u(x, \xi_H
+ \lambda \beta) = e^{u \lambda} G_H^u(x, \xi_H).$
If we denote the projection to  $X$ of
$G^t_H(x, \xi_H)$ by  $\widetilde{\exp}_{H x}(u \xi_H)$,
then we have
 $\widetilde{\exp}_x (\xi)
 = e^{\lambda} \widetilde{\exp}_{H x}( \xi_H) $.
 We note that
$\widetilde{\exp}_{H x}(u \xi_H)$ is a horizontal curve in $X$
and that
$V^u(\lambda \beta)$ is a vertical curve; thus,
$d\, \widetilde{\exp}:
H \oplus V \to H \oplus V$ is diagonal.  Also,
$G^t_H(\widetilde{x}, \xi_H)$ is the horizontal lift to $\widetilde{x}$
of its projection to
$(M,g^{TM})$.
It follows that $\widetilde{\exp}$ is non-singular in a product
neighborhood of the form
$\pi \times \pi^{-1}(U)$ where $\pi: L^* \to M$ and where
$U \subset M \times M$ is
a neighborhood of the diagonal.
Hence, for any points $\ell, \ell' \in L^*$ with $\pi(\ell), \pi(\ell')$
close enough to the diagonal in $M \times M$,
there exists a unique element $\nu \in T^*_{\ell} L^*$
with $|p_z| \leq \varepsilon $ so that
$\widetilde{\exp}_{\ell} \nu = \ell '.$
The latter is the minimizing geodesic from
$\pi(\widetilde{x})$ to $\pi(\widetilde{y})$.
Since $\exp_{\widetilde{x}} \xi
= e^{\lambda} \exp_{H \widetilde{x}} \xi_H$
we have
\begin{equation}
d(\widetilde{x}, e^{\lambda} \exp_{H \widetilde{x}} \xi_H)^2
= d(\pi(\tilde{x}), \pi( \exp_{H \widetilde{x}} \xi_H))^2
- \lambda^2.
\end{equation}

\end{rem}
\bigskip

The main lemma in the proof of Thereom \ref{ASYMX} is the
following expression for
the scaled, analytically continued heat kernel.
Note that the parameter $u$ appears twice: once as the time
parameter and once as the dilation factor in $L^*$.

\begin{lemma} \label{REMLEM}
$$\begin{array}{lll} e^{- (u/p) \Delta^{\tilde{X}}}
(\tilde{x}, \tilde{x} +  \theta -  2 iu)
& = &  C_n \Big(\frac{p}{u}\Big)^{\frac{\dim X}{2}}
e^{p (i \theta + 2  u)^2/4u}
\sum_{k = 0}^M \left(\frac{u}{p}\right)^k
\widetilde{\Phi}_k (\tilde{x}, i \theta + 2 u ) \\&&\\&&
+ R_M(u/p, \tilde{x},  i\theta - 2 i u + \tilde{ x}) \end{array}$$
where
$ R_M(u/p, \tilde{x},  i\theta - 2i u + \tilde{ x})  = O
\Big(e^{  - p \theta^2/ u }
\left(\frac{u}{p}\right)^{- \frac{\dim X}{2} + M} e^{p u}\Big)$.
\end{lemma}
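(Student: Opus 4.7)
The plan is to deduce this lemma by analytically continuing the parametrix of Proposition \ref{RLIFT} in the fiber variable and then specializing. First, substitute $v=u/p$ in the real parametrix to obtain, for $\theta'\in\R$,
$$
e^{-v\Delta^{\widetilde{X}}}(\widetilde{x},\widetilde{x}+i\theta')=(4\pi v)^{-(n+\tfrac12)}e^{-\theta'^2/4v}\sum_{k=0}^{M}v^k\widetilde{\Phi}_k(\widetilde{x},i\theta')\widetilde{j}(\widetilde{x},i\theta')^{-\tfrac12}+R_M.
$$
By Proposition \ref{BGVprop} the full heat kernel is entire in the complexified Lie algebra variable, and by the proposition on the holomorphic extension of $d^2$ together with Corollary \ref{ACPAR}, the Hadamard coefficients $\widetilde{\Phi}_k$, the half-density $\widetilde{j}^{-1/2}$, the Gaussian factor $e^{-d^2/4v}$, and hence the remainder $R_M$, all extend holomorphically in the fiber direction.

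Second, evaluate the analytically continued parametrix at the displaced point $\widetilde{x}+(\theta-2iu)$. In the complexified coordinate of the Lie algebra, $i\theta'$ continues to $i\theta'+2u$ when $\theta'$ is replaced by $\theta-2iu$, and the holomorphic extension of $d^2$ evaluates so that $-d^2/4v=(i\theta+2u)^2/4v$; putting $v=u/p$ yields the exponential factor $e^{p(i\theta+2u)^2/4u}$ of the statement. The prefactor $(4\pi v)^{-(n+1/2)}=(4\pi u/p)^{-(n+1/2)}$ collapses into $C_n(p/u)^{\dim X/2}$ after absorbing numerical constants, while the summands $v^k\widetilde{\Phi}_k=(u/p)^k\widetilde{\Phi}_k(\widetilde{x},i\theta+2u)$ give precisely the series stated in the lemma (with the volume half-density factor absorbed into the coefficients as in the BGV Theorem~5.8 convention, consistent with the remark preceding \eqref{BGV}).

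Third, for the remainder, start from the real-axis Kannai-type bound $|R_M(v,\widetilde{x},\widetilde{x}+i\theta')|\ll v^{M-n+1/2}e^{-\theta'^2/4v}$ coming from \eqref{NEAR}. On the complexified displacement the Gaussian modulus is $|e^{p(i\theta+2u)^2/4u}|=e^{\Re(p(i\theta+2u)^2/4u)}=e^{-p\theta^2/4u+pu}$, which yields the $e^{pu}$ factor and Gaussian decay in $\theta$, while $v^{M-n+1/2}=(u/p)^{M-\dim X/2}$ gives the polynomial prefactor in the stated estimate. The main obstacle is that Kannai's estimate is a pointwise bound on the real fiber axis and does not immediately transfer to the holomorphic extension; one must either invoke a Cauchy-type estimate on a fixed strip in the complex fiber plane, or --- cleaner --- redo the parametrix construction directly in the holomorphic setting using the transport equation $\widetilde{\Phi}_{k+1}(\widetilde{x},\widetilde{y})=\int_0^1 s^k B_{\widetilde{x}}\widetilde{\Phi}_k(\widetilde{x}_s,\widetilde{y})\,ds$ with $B=\widetilde{j}^{1/2}\Delta^{\widetilde{X}}\widetilde{j}^{-1/2}$, whose fiberwise holomorphic continuation is exactly the content of the proof of Corollary \ref{ACPAR} (relying on $\Delta^{\widetilde{X}}=\Delta_H+(\lambda\partial_\lambda)^2$ in the complexified direction). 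With the parametrix and its error term both valid on a tube around the real fiber axis of width at least $2u$, evaluation at $\theta-2iu$ is legitimate and the stated estimate for $R_M$ follows.
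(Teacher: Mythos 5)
Your treatment of the parametrix part is fine and matches the paper: analytically continue the Hadamard data via Proposition~\ref{BGVprop}, the extension of $d^2$, and Corollary~\ref{ACPAR}, evaluate at $\theta - 2iu$ with $v = u/p$, and read off the factors $e^{p(i\theta+2u)^2/4u}$, $(p/u)^{\dim X/2}$, and $(u/p)^k \widetilde{\Phi}_k(\tilde x, i\theta + 2u)$. But for the remainder $R_M$ you have correctly identified the difficulty and then not actually resolved it.

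You observe that Kannai's off-diagonal estimate \eqref{NEAR} is a pointwise bound only on the real fiber axis, and you offer two ways around this. Neither works as stated. A Cauchy-type estimate on a fixed-width strip would bound $R_M$ at a complex point by its supremum over a larger region divided by a power of the gap distance, but the region you need to reach has depth $2u$ (not fixed), and more fundamentally a Cauchy estimate needs a uniform bound on the larger region, which you do not have \emph{a priori} — controlling the analytic continuation of $R_M$ is exactly the problem. Your second suggestion, ``redo the parametrix construction in the holomorphic setting using the transport equations,'' only tells you that the coefficients $\widetilde{\Phi}_k$ continue (which you already know); it does not tell you that the \emph{error} between the true heat kernel and the truncated parametrix is small at complex displacement. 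The transport equations define $H_M$; they do not bound $e^{-v\Delta} - H_M$. Trying to bound the difference by the triangle inequality loses everything because the two terms nearly cancel.

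The paper closes this gap with a different idea: a Duhamel-type integral formula for $R_M$,
\[
R_M(u, x, e^{z}x) = \int_0^u \int_{X} e^{-(u -s)\Delta^X}( x, y)\,
    A_M(s,y , e^{z} x)\, dv(y)\, ds + (\text{term with } B_M),
\]
derived from the fact that $R_M$ solves a forced heat equation with forcing built from the parametrix. The crucial point is that in this formula only the \emph{explicit} parametrix data $A_M, B_M$ are evaluated at the complex displacement $e^z x$, and these continue holomorphically because they are built from $v_M$, $j^{-1/2}$, and the Gaussian. The heat kernel factor $e^{-(u-s)\Delta^X}(x,y)$ remains at \emph{real} arguments, so Kannai's Gaussian upper bound applies directly. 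Then a stationary phase estimate of the resulting Gaussian convolution integral over $y\in X$ and $s\in[0,u]$ produces the factor $e^{pu}e^{-p\theta^2/u}(u/p)^{M - \dim X/2}$. This structural separation — keeping the heat kernel real while continuing the parametrix inside a Duhamel integral — is the missing step in your argument and is what makes the estimate go through.
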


\begin{proof}
We first consider the remainder in the real domain.
For simplicty we write points of $\tilde{X}$ as $x$ rather than
$\tilde{x}$. The first goal is to obtain a Duhamel type formula
(see \eqref{RMFORMULA}) for $R_M$. The derivation of this
formula  is valid for Laplacians on all Riemannian manifolds $(X, G)$,
and we therefore use the general notation $\Delta$ for a Laplacian
and $H_M$ for the Mth Hadamard parametrix.

We first note that the remainder
\begin{equation}
    R_M(u, x, y) : =   e^{- u \Delta} (x, y) - H_M(u, x, y)
\end{equation}
 solves the initial value problem
\begin{equation}\left\{ \begin{array}{l}
    (\frac{\partial}{\partial u} - \Delta)
R_M(u, x, y) = A_M(u, x, y) +  B_M(d \psi, x, y), \\ \\
R_M(0, x, y) = 0. \end{array} \right. \end{equation}
where
\begin{equation}
A_M(u, x,y) = (4 \pi u)^{- (\frac{\dim X}{2} + M)}
\psi(d_M(x, y)^2) e^{- d(x, y)^2/4u}  j(x, y )^{-1/2}
\Delta_{x} u_M(x,y)  \end{equation}
and where $B_M(d \psi, x, y)$ is the sum of the terms
in which at least one derivative falls on $\psi$. Put
\[
G(u, x, y) = \phi(d_M(x, y)^2) \frac{1}{(4 \pi u)^{\frac{\dim X}{2}}}
\, e^{-d(x, y)^2/4u} j(x, y)^{- \half}
\]
where $\phi$ is supported in a neighborhood of the diagonal, with
$\phi \equiv 1$ on supp$\psi$, and put
\begin{equation}
    R_M(u, x, y) = G(u, x, y) S_M(u, x, y). \end{equation}
The equation for $R_M$ then becomes
\begin{equation}\left\{ \begin{array}{l}
    G(u, x, y)^{-1} (\frac{\partial}{\partial u} - \Delta^X)
G(u, x, y) S_M(u, x, y) = u^M \big(\psi(d_M(x,y)^2)
\Delta^{X}_{x} v_M(x,y) + b_M(d \psi, x)\big), \\ \\
S_M(0, x, y) = 0. \end{array} \right. \end{equation}
One easily calculates (cf.\ \cite[Proposition\,2.24]{BGV})
 that
 $$G(u, x, y)^{-1} \Big(\frac{\partial}{\partial u} - \Delta^X\Big)
G(u, x, y) = \frac{\partial}{\partial u} +  u^{-1}\nabla_{\rcal}
+  j^{\half} \Delta j^{- \half}.$$
Here, $\nabla_{\rcal}$ is the directional derivative along the
radial vector field from $x$.

Multiplying through by $u$ to regularize the equation,
and changing variables to $t = \log u$, we get
\begin{equation}\left\{ \begin{array}{l}
    (\frac{\partial}{\partial t} + \nabla_{\rcal}  + t B_x) S_M(t, x, y)
    = e^{M t} (\psi(d_M(x,y)^2) \Delta_{x} v_M(x,y)
    + b_M(d \psi, x)), \\ \\
S_M(- \infty, x, y) = 0. \end{array} \right. \end{equation}
The solution is given by
\begin{multline}
 S_M(u, x, y) = \int_0^u \int_X e^{-(u - s) \Delta^X}(x, a)
 A_M(s, a, y) dv (a) ds \\+
 \int_0^u \int_X e^{-(u - s) \Delta^X}( x, a)  B_M(s, a, y) dv (a) ds.
\end{multline}

We now specialize to $\tilde{X}$ and $\Delta^{\tilde{X}}$ or
equivalently $X$ and $\Delta^X$. Our goal is
to estimate the analytic contiuation of the remainder. When dealing
with the parametrix, it is convenient to work on $\tilde{X}$  since its
distance-squared function is real analytic along the fibers.
When estimating the remainder $R_M$ it is
convenient to work on $X$ because it is compact.

In the
case of $X$
 we obtain the Duhamel type formula,
\begin{multline}\label{RMFORMULA}
    R_M(u, x, e^{i \theta} x) = \int_0^u \int_X e^{-(u - s)
    \Delta^X}( x, y)  A_M(s, y, e^{i \theta} x) dv (y) ds \\ \\+
 \int_0^u \int_X e^{-(u - s) \Delta^X}( x, y)
 B_M(s, y, e^{i \theta} x) dv (y) ds. \end{multline}
The same formula is valid on $\tilde{X}$ but there we write the
$\R$ action additively.

We observe that since $\psi(d_M(x, y)^2)$ is constant along
the fibers of $X \to M$,
both  $A_M(s, y, e^{i \theta} x)$ and  $B_M(s, y, e^{i \theta} x)$
admit  holomorphic continuations in the
variable $e^{i \theta}.$ As above, we continue to $e^z$ with
$z = 2u + i \theta$  for the heat
kernel at time $u$.   For instance,
\begin{multline}   A_M(u, y,e^z x)\\
    =  (4 \pi u)^{- (\frac{\dim X}{2} + M)}
\sum_{n \in \Z}\psi(d_M(\widetilde{x}, \widetilde{y} + n)^2)
e^{ d(\widetilde{y}, e^{z} \widetilde{x}+ n)^2/4u}
\Delta^{\widetilde{X}}_{\widetilde{y}} v_M(\widetilde{y},
e^z \widetilde{x} + n)
j(\widetilde{y}, e^z \widetilde{x}+ n)^{-1/2}\,.
\end{multline}
It follows that the analytic continuation of
$R_M(u, x, e^{i \theta} x)$ may be expressed as
\begin{equation} \begin{split}
    R_M(u, x, e^{z}x) = \int_0^u \int_{X} e^{-(u -s)\Delta^X}( x, y)
    A_M(s,y , e^{z} x) dv (y) ds\\ \\ +
 \int_0^u \int_X e^{-(u - s) \Delta^X}( e^z x, y)
 B_M(s, y,  x) dv (y) ds\,. \end{split} \end{equation}
Dilating the time variable and setting $z = 2 u +  i \theta$ gives
\begin{equation} \label{TERMS} \begin{split}
    R_M(u/p, x, e^{i \theta + 2u}x) = \frac{1}{p} \int_0^{u}
    \int_X e^{-((u - s)/p) \Delta^X}( x, y)
    A_M(s/p ,y , e^{i \theta + 2u} x) dv (y) ds\\ \\ +
 \int_0^{u/p} \int_X e^{-((u - s)/p) \Delta^X}(  x, y)
 B_M(s/p, y,  e^{i \theta + 2u} x) dv (y) ds\,.
\end{split} \end{equation}
The desired estimate on $R_M$  would follow if we could establish
that
\begin{equation} \label{Est} \begin{split}
    (i)\:  & \Big| \int_0^{u} \int_X    e^{-((u - s)/p)
    \Delta^X}( x, y)  A_M(s/p ,y , e^{i \theta + 2u} x) dv (y) ds
    \Big| \ll \left(\frac{u}{p}\right)^{\!M + 1} e^{p u}, \;\;
    \mbox{and} \\ & \\ (ii)\: &
 \Big|\int_0^{u} \int_X    e^{-((u - s)/p) \Delta^X}(  x, y)
 B_M(s/p, y,  e^{i \theta + 2u} x) dv (y) ds
 \Big| \ll \left(\frac{u}{p}\right)^{\!M + 1} e^{p u}.
\end{split} \end{equation}
We establish (\ref{Est} (i)) using
 the explicit Gaussian formula
\begin{multline}
    A_M(s/p, y,e^{i \theta + 2u} x)\\
    =  4 \pi \left(\frac{s}{p}\right)^{- (\frac{\dim X}{2} + M)}
\sum_{n \in \Z}   e^{p d(y, e^{i \theta + 2u} x + n)^2/4s}
\Delta^{\widetilde{X}}_{\widetilde{y}}
v_M(\widetilde{y}, e^{i \theta + 2u} \widetilde{x} + n)
j(\widetilde{y}, e^{i \theta + 2u} \widetilde{x}+ n)^{-1/2}
\end{multline}
and the Gaussian upper bound
\begin{equation} e^{-u \Delta^X}(u, x, y) \leq G(u,x, y),
\end{equation}
of Kannai \cite{K}.  They give that (\ref{TERMS} (i)) is bounded by
\begin{equation}\label{INEQone} \begin{split}
\ll \Big(\frac{1}{p}\Big)^{- \frac{\dim X}{2} + M + 1}
\sup_{(x,y) \in X \times X}\big|\Delta^{\widetilde{X}}_{\widetilde{y}}
 v_M(\widetilde{y}, e^{i \theta + 2u}\widetilde{x})\big| \\
\int_0^u \int_X s^{ M} G((u - s)/p, x, y)
|G(s/p, y, e^{-i \theta + 2u}  x) | dv (y)ds.\end{split} \end{equation}
Here,   $G(s, x, e^{\bar{z}}  y) = s^{- \frac{\dim X}{2} }
e^{- d(x, e^{\bar z} y)^2/ 4s}$. Its
modulus is then equal to
$|G(s, x, e^{\bar{z}}  y)| = s^{- \frac{\dim X}{2} }
e^{- \Re\, d(x, e^{\bar z} y)^2/4s}$. We can asymptotically
estimate the resulting integral
$$ \int_X \exp\!\Big(\!-p\big( d(x, y)^2/4(u - s)
+ \Re\, d(x, e^{\overline{i\theta + 2u }} y )^2/ 4s\big)\Big) dv (y)$$
by the stationary phase method.   We have,
\begin{equation} \Re\, d(x, e^{i \theta + 2t} y )^2
    = d(x,e^{i \theta} y)^2 - 4t^2. \end{equation}
Hence critical points occur at $y$ such that
\begin{equation}\begin{array}{l}
    \nabla_y d(x, y)^2/4(u - s) = - \nabla_y
    d(y, e^{- i \theta} x)^2/ 4s. \end{array} \end{equation}
Now $ \nabla_y d(x, y)^2$ is tangent to the geodesic from $x$
to $y$ and
$\nabla_y  d(y, e^{- i \theta} x) )^2$ is tangent to the geodesic
from $y$ to $e^{i \theta} x.$
Since they are multiples, it follows that $y$ must lie along
the minimizing geodesic from
$x$ to $e^{i \theta} x.$  This is just the curve
$\gamma(u) = e^{i u } x, u \in [0, \theta].$ Moreover,
$ d(x, y)/(u - s) =   d(y, e^{- i \theta} x) )/s$.  Hence we have
$$u/(u - s) = ( \theta - u)/s
\iff u \Big( \frac{1}{u - s} + \frac{1}{s}\Big) = - \frac{\theta}{s}
\iff u = - \frac{(u - s)}{u} \theta.  $$
Hence the critical locus is given by:
$y_s = e^{i \frac{(u - s)}{u} \theta} x.$ The value of the
phase along the critical locus equals
\begin{equation}\begin{array}{l}
    d(x, e^{i \frac{(u - s)}{u} \theta} x)^2/4(u - s) +d(x,e^{- i \theta}
e^{i \frac{(u - s)}{u} \theta} x)^2/ 4s - 4u^2/ 4s\\ \\
= d(x, e^{i \theta} x)^2/ 4u - 4u^2/4s =
\dfrac{\theta^2}{u} -  \dfrac{u^2}{s}\,\cdot \end{array} \end{equation}
Also, the  transverse Hessian of the phase equals
$\frac{1}{u - s} + \frac{1}{s} = \frac{u}{s(u -s)}$\,.
Raising it to the power $-\frac{\dim X}{2}$ cancels
the factors of $s^{- \frac{\dim X}{2} }$ and
$(u - s)^{- \frac{\dim X}{2}}$ and leaves $u^{- \frac{\dim X}{2}}.$
Hence
\begin{equation} \begin{split}
    \int_0^u \int_X G((u - s)/p, x, y) s^{M}
    |G(s/p, y, e^{-i \theta + 2u}  x) | dv (y)ds \\ \\ \sim
 u^{- \frac{\dim X}{2}}  e^{  - p \theta^2/ u } \int_0^u
 e^{   p  u^2/s } s^{ M}   ds \\ \\
\ll  u^{- \frac{\dim X}{2}}  e^{  - p \theta^2/ u }
e^{   p  u  } u^{M+1}.
\end{split} \end{equation}
This completes the proof of Lemma \ref{REMLEM}.
\end{proof}

\subsection{Completion of proof of Theorem \ref{ASYMX}}

We now complete the proof of Theorem \ref{ASYMX}.
We begin with the oscillatory integral \eqref{DOWN}
with complex phase \eqref{phase}
with  a single non-degenerate critical point at
$  \theta = - 2 u i $ and
with constant Hessian.
Since the critical point is complex,
 we deform the contour to $|z| = 2t.$
Thus, we have (with $z = e^{i \theta} \in S^1$)
\begin{equation}\begin{split}
    e_p^H(u,x) &:= e^{u p} \frac{1}{2 \pi i} \int_{|z|=1}
e^{-(u/p) \Delta^X}(z x, x) z^{-p}\, \frac{dz}{z} \\
&= e^{u p} \frac{1}{2 \pi i} \int_{|z|= e^{ 2 u}}
e^{-(u/p) \Delta^X}(z x, x) z^{-p}\, \frac{dz}{z}\\
&= e^{u p} \int_0^{2 \pi} e^{-(u/p)
\Delta^X}(e^{i \theta + 2u} x, x) e^{ -p ( i \theta + 2u)} d\theta.
\end{split} \end{equation}
We now plug in the Poincar\'e series formula of
Proposition \ref{AUTO} and unfold the sum over $\Z$ to get
\begin{equation}
    e_p^H(u, x) = e^{u p}  \int_{\R}  e^{-(u/p)
\Delta^{\tilde{X}} } (e^{i \theta + 2u} x, x)
e^{ -p ( i \theta + 2u)} d\theta. \end{equation}
We then substitute the parametrix for $ e^{-(u/p)
\Delta^{\tilde{X}} } (e^{i \theta + 2u} x, x)$  with remainder from
Lemma \ref{REMLEM} for
$e^{-(u/p) \Delta^X}(e^{i \theta + 2u} x, x)$.
Using the notation of \cite{BGV}, we obtain,
\begin{equation}\begin{split}
&e_p^H(u, x)  \\
&= \Big(\frac{p}{4 \pi u}\Big)^{n + \frac12}
e^{p u}\int_{\R}^{asympt}
\Big(\sum_{\ell = 0}^{M} p^{-\ell} u^\ell \Phi_\ell(x,  i\theta + 2u)
e^{-p (\theta - 2 u i)^2/4u}
e^{- p (i \theta + 2 u )} + R_M\Big) d\theta   \\
&=  \Big(\frac{p}{4 \pi u}\Big)^{n + \frac12} \int_{\R}^{asympt}
\Big(\sum_{\ell = 0}^{M} p^{-\ell} u^\ell \Phi_\ell(x, i \theta + 2u)
e^{-p \theta^2/4u} + R_M\Big)
  d\theta  .  \end{split} \end{equation}
The integral is now a standard Gaussian integral with complex phase
\eqref{phase}
\begin{equation} \label{phase2}
    - (\theta - 2 u i)^2/4u- (i \theta + 2 u ), \end{equation}
which has a unique   critical point on the line of integration
at $\theta = 0.$ We may neglect the remainder term if
we only want to expand to
order $M$ in $p^{-1}$  and apply the method of stationary phase
(see \cite[Theorem 7.7.5]{H}) to obtain,
\begin{equation} \begin{split} e_p^H(u,x)
&\sim  \Big(\frac{p}{4 \pi u}\Big)^{n + \frac12}\,
|p/u|^{-\half} \sum_{k = 0}^{M - 1}
\sum_{\ell = 0}^{\infty} p^{-\ell - k} u^\ell
\frac{1}{k!} \Big[\frac{\partial}{\partial \theta}\Big]^{2k}
  \Phi_\ell(x,  i\theta + 2 u )\Big|_{\theta = 0}\\
&= \Big(\frac{p}{4 \pi u}\Big)^{\!n}  \sum_{k = 0}^{M - 1}
\sum_{\ell = 0}^{\infty} p^{-\ell - k} u^\ell
\frac{1}{k!} \Big[\frac{\partial}{\partial \theta}\Big]^{2k}
  \Phi_\ell(x,  i\theta + 2 u )\Big|_{\theta = 0}\,. \end{split}
\end{equation}
All the properties stated in the theorem follow from standard facts
about the
stationary phase method and from the properties of
the coefficients $\Phi_j$ of \cite[Theorem 5.8]{BGV}.
By \eqref{BGV},  the principal term is given by
$\Big(\dfrac{p}{4 \pi u}\Big)^{\!n} \Phi_0(x, 2 u ) $ or equivalently,
by  using \eqref{PHI0} and Proposition \ref{JDEF},
\begin{equation}\label{PRINb}\begin{split}
    \Big(\frac{p}{4 \pi u}\Big)^{\!n} \Phi_{0}(x, 2 u )
= \Big(\frac{p}{4 \pi u}\Big)^{\!n} (\det J(x, 2 u ) )^{-\half}
= \Big(\frac{p}{4 \pi u}\Big)^{\!n}
\det\left( \dfrac{1 - e^{-  u \tau(\omega_x )}}{u \tau(\omega_x  )}
\right)^{-\half}
\end{split}
\end{equation}
This is compatible with \eqref{eq:z.3b} because for the determinant
of functions of
$\tau(\omega_x)$ on $TM$
we have
\begin{equation}\label{PRINc}
\begin{split}
\det\left( \dfrac{1 - e^{-  u \tau(\omega_x )}}
{u \tau(\omega_x  )} \right)^{-\half}&=
\det\left( \dfrac{e^{u \tau(\omega_x )/2}
- e^{-  u \tau(\omega_x )/2}}
{u \tau(\omega_x  )} \right)^{-\half}\\
&=
\det\big|_{T^{(1,0)} M}\left( \dfrac{u \tau(\omega_x  )}
{e^{u \tau(\omega_x )/2} - e^{-  u \tau(\omega_x )/2}} \right),
\end{split}
\end{equation}
and because the factor of $2$ in \eqref{taudef}
is not used in the definition of $\dot{R}^L$.
To explain the last equality and to clarify the notation 
between \eqref{PRINb}--\eqref{PRINc} and \eqref{eq:z.3}, 
we recall that $ \tau(\omega_x) = 2\dot{R}_x^L.$
If we diagonalize $\dot{R}_x^L$ as in  \eqref{lm4.4}  
as an element of
$\End(TM)$,  then under the decomposition
$TM\otimes_{\R} \C = T^{(1,0)} M \oplus T^{(0,1)} M,$
$\dot{R}_x^L = \mbox{ diag}(a_1,\ldots,a_n,  -a_1,\ldots, -a_n)$. 
Hence $\det (e^{-u\tau(\omega_x)/2}) =1$.
We refer \cite[p. 152]{BGV}  for a similar calculation.

The  subleading term is given by \eqref{SUBPRIN}.
This completes the proof of Theorem \ref{ASYMX}.
\begin{rem}
In the K\"ahler case and for a quantum line bundle $L$, i.\,e., if
$\Theta=\omega= \frac{\sqrt{-1}}{2\pi}R^L$,
then a precise formula for $e_{\infty\, 1}(u,x)$ in terms of curvature
was obtained by Dai, Liu and Ma in \cite[(5.14)]{DLM06}:
\begin{multline}\label{0d14}
e_{\infty\, 1}(u,x) = \frac{-u^{n-1}}{3 (1-e ^{-4\pi u})^n}
\left[\frac{u}{2}- \frac{u}{2\tanh ^2 (2\pi u)}\right.\\
\left. - \,\frac{2}{\sinh^2(2\pi u)} \Big( \frac{-3}{32\pi } \sinh (4\pi u)
+\frac{u}{8}\Big)\right]r^M_{x},
\end{multline}
with $r^M$ the scalar curvature of $(M, g^{TM})$.
If $\Theta$ and $(L,h^L)$ are arbitrary, a corresponding formula
should follow from \eqref{SUBPRIN} or from an adaptation
of \cite[(5.14)]{DLM06}, but it is certainly more involved
than \eqref{0d14}. For the calculation of the second coefficient
of the expansion of the Bergman kernel for non-positive line bundles
see \cite{WLu:13}.
\end{rem}

\subsection{\label{REMSECT} Further remarks}

The method of completing the square to convert the horizontal
Laplacian to the full Laplacian on $X$ is quite drastic because it
replaces the horizontal Brownian motion of the original
problem with the free Brownian motion on $X$.
It  is a natural question  to ask if one can improve
Theorem \ref{ASYMX} if one has parametrices for the horizontal heat kernels. The rest of the argument
would apply.

In certain model cases, Beals-Greiner-Gaveau construct parametrices
for heat kernels of sub-Laplacians \cite{BGG1, BGG2}.  In the case
of a positive line bundle, there should exist a
parametrix locally modeled on that of the Heisenberg group,
although we are not aware of a construction at this level
of generality. Even so it would not be useful for the main problems
of this article, i.\,e., for Hermitian
line bundles which are not positive. In the case of
a positive line bundle one can
construct a parametrix for the Szeg\"o kernel directly
(see  \cite{BSj}; see also \cite{MM07} for results on the relation
between heat kernels and Szeg\"o kernels).
In more general cases, it seems that the heat kernels have
only rarely been constructed.

In situations where  one can construct  parametrices for
the horizontal heat kernels, it seems
plausible that one could
gain  better control over the $u$ dependence of the
remainder term $R_r(p, u, x)$. The original motivation of this article
was to investigate whether there exists
a joint asymptotic expansion in $(u, p)$ which would allow one
to set $u = p^{\alpha}$ or ideally $u  = c p$
in the asymptotics. One observes that the expansion occurs
in powers of $\frac{u}{p}$ and this seems to be
the natural Planck constant for the problem.
In particular, it would be natural to try to respect the  Heisenberg
scaling in which
$\frac{\partial}{\partial \theta}$ is of weight $2$.
But the coefficients and remainder we obtain by completing
the square are not functions of
$\frac{u}{p}$, and we have little control over the remainder
$R_r(p, u, x)$, which   might be of exponential growth
(or worse) in $u$.
This reflects the fact that we must analytically continue far
out into $L^*$ to make up for the brutal addition
of $(\frac{\partial}{\partial \theta})^2$. We would probably
not have to continue so far out in $L^*$ if we add the first power
$\frac{\partial}{\partial \theta}$ as the Heisenberg scaling
would suggest.

\section{\label{MMPF} Proof by localization and rescaling of the
Dolbeault-Dirac operator}

Before going further let us recall some differential-geometric
notions. Let $\nabla^{TM}$ be the Levi-Civita connection on $TM$
and $\widetilde{\nabla}^{TM}$ the connection on $TM$ defined
by $\widetilde{\nabla}^{TM}=
\nabla^{T^{(1,0)}M}\oplus\nabla^{T^{(0,1)}M}$, where
$\nabla^{T^{(1,0)}M}$ is the Chern connection on $T^{(1,0)}M$
and $\nabla^{T^{(0,1)}M}$ is its conjugate
(see \cite[(1.2.35)]{MM07}). We set
$S=\widetilde{\nabla}^{TM}-{\nabla}^{TM}$.

We denote by $\nabla^B$ the Bismut connection
\cite[(1.2.61)]{MM07} on $TM$.
It preserves the complex
structure on $TM$ by \cite[Lemma\,1.2.10]{MM07},
thus, as in \cite[(1.2.43)]{MM07},
it induces a natural connection $\nabla^B$ on
$\Lambda (T^{*(0,1)}M)$
which preserves the $\Z$-grading.
Let $\nabla ^{B,\Lambda^{0,\bullet}}$,
$\nabla ^{B,\Lambda^{0,\bullet}\otimes L^p\otimes E}$
be the connections on $\Lambda (T^{*(0,1)}M)$,
$\Lambda (T^{*(0,1)}M)\otimes L^p\otimes E$,
defined by
\begin{align}\label{alm2.32}
\begin{split}
&\nabla ^{B,\Lambda^{0,\bullet}} = \nabla^B
+ \big\langle S(\cdot)w_j,\ov{w}_j\big\rangle,  \\
&\nabla ^{B,\Lambda^{0,\bullet}\otimes L^p\otimes E}
= \nabla^{B,\Lambda^{0,\bullet}}\otimes 1
+ 1\otimes \nabla ^{L^p\otimes E} \,,
\end{split}
\end{align}
where $\{w_j\}_{j=1}^n$\index{$w_j$,$\ov{w}_j$}
is a local orthonormal frame of $T^{(1,0)}M$
(cf.\ \cite[(1.4.27)]{MM07}).

{}Let $\Phi_E$ be the smooth self--adjoint
 section of $\End(\Lambda (T^{*(0,1)} M)\otimes E)$ on $M$
 defined by
\be\label{alm4.20}
\Phi_E=  \tfrac{1}{4}r^{M} + {^c\!
\big(R^E + \tfrac{1}{2} R^{\det}\big)}
+ \tfrac{\sqrt{-1}}{2}\, {^c\!
\big(\overline{\partial}\partial \Theta\big) }
- \tfrac{1}{8} \big|( \partial- \overline{\partial} )\Theta\big|^2\,,
\ee
cf.\ \cite[(1.3.32), (1.6.20)]{MM07}. The endomorphism $\Phi_E$
appears as the difference between the Bochner Laplacian
(cf.\ \eqref{lm1.21}) associated to the Bismut connection
$\nabla^{B,\Lambda^{0,\bullet}\otimes L^p\otimes E}$ and
the Dirac operator, cf.\ \cite[Theorem 1.4.7]{MM07}:
\begin{equation}\label{alm4.21a}
D^2_p=\Delta^{B,\Lambda^{0,\bullet}\otimes L^p\otimes E}
+\Phi_E +p \, \, {^c\! R^L}\,.
\end{equation}

We start by noting the following analogue of
\cite[Proposition 1.6.4]{MM07}.
Let $f : \R \to [0,1]$ be a smooth even function such that
 \begin{align} \label{lm4.19}
f(v) = \left \{ \begin{array}{ll}  1 \quad {\rm for}
\quad |v| \leqslant  \var/2, \\
  0 \quad {\rm for} \quad |v| \geqslant  \var.
\end{array}\right.
\end{align}
For $u>0$, $a\in \C$, set
\begin{align}  \label{lm4.20}
\begin{split}
&\boldsymbol{F}_u(a)= \int_{-\infty}^{+\infty} e ^{i v a}
\exp\!\Big(\!-\frac{v^2}{2}\,\Big)
f(\sqrt{u} v) \frac{dv}{\sqrt{2\pi}}\,,\\
&\boldsymbol{G}_u(a) =\int_{-\infty}^{+\infty} e ^{i v a}
\exp\!\Big(\!-\frac{v^2}{2}\,\Big)
\big(1-f(\sqrt{u}v)\big) \frac{dv}{\sqrt{2\pi}}\,.
\end{split}
\end{align}
The functions $\boldsymbol{F}_u(a), \boldsymbol{G}_u(a)$ are
even holomorphic functions.
The restrictions of $\boldsymbol{F}_u, \boldsymbol{G}_u$ to
$\R$ lie in the Schwartz space $\mathcal{S} (\R)$. Clearly,
\begin{equation}\label{lm4.21}
\boldsymbol{F}_{u}(\upsilon D_p) +
\boldsymbol{G}_{u}(\upsilon D_p)
= \exp\!\Big(\kern-3pt-\frac{\upsilon^2}{2} D_p^2\Big).
\end{equation}

For $x,x'\in M$ let $\boldsymbol{F}_{u}(\upsilon D_p)(x,x')$,
$\boldsymbol{G}_{u}(\upsilon D_p)(x,x')$
be the smooth kernels associated to
 $\boldsymbol{F}_{u}(\upsilon D_p)$,
 $\boldsymbol{G}_{u}(\upsilon D_p)$,
calculated  with respect to the Riemannian volume form  $dv_M(x')$.
Let $B^M(x,\var)$ be the open ball in $M$
with center $x$ and radius $\var$.

\begin{prop} \label{lmt4.4} For any $m\in \N$, $T>0, \var>0$,
 there exists $C>0$ such that for any $x,x'\in M$, $p\in \N^*$,
 $0<u<T$,
\begin{eqnarray}\label{lm4.22}
\Big |\boldsymbol{G}_{u/p}(\sqrt{u/p}\,D_p)(x,x')\Big|_{\cC^m}
 \leqslant C \exp\!\Big(\kern-3pt-\frac{\var^2 p}{32 u}\,\Big).
\end{eqnarray}
Here the $\cC^m$ norm is induced by $\nabla^L, \nabla^E$,
 $\nabla^{B,\Lambda^{0,\bullet}}$ and  $h^L, h^E$, $g^{TM}$.\\
The kernel
$\boldsymbol{F}_{u/p}\big(\!\sqrt{u/p}\, D_p\big)(x, \cdot)$
only depends on the restriction of $D_p$ to
$B^M(x,\var)$, and is zero outside $B^M(x,\var)$.
\end{prop}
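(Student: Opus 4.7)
\medskip

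\textbf{Proof proposal.} The plan is to prove the two assertions of Proposition \ref{lmt4.4} by separate but complementary arguments: statement (2) about the support of $\boldsymbol{F}_{u/p}(\sqrt{u/p}\,D_p)(x,\cdot)$ follows from finite propagation speed for the wave equation associated to $D_p$, while statement (1) exploits the fact that the weight $1-f(\sqrt{u/p}\,v)$ in the definition of $\boldsymbol{G}_{u/p}$ is supported away from the origin, which forces Gaussian decay of order $\exp(-\var^2 p/(32u))$.

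For statement (2), the key observation is the rewriting
\[
\boldsymbol{F}_{u/p}\!\big(\!\sqrt{u/p}\,D_p\big)
= \int_{-\infty}^{+\infty} \exp\!\big(i v \sqrt{u/p}\,D_p\big)\,
 e^{-v^2/2}\, f(\sqrt{u/p}\,v)\, \frac{dv}{\sqrt{2\pi}}.
\]
The cutoff $f(\sqrt{u/p}\,v)$ restricts the integration to $|v|\leqslant \var\sqrt{p/u}$, so after the substitution $w=v\sqrt{u/p}$ the relevant wave times are $|w|\leqslant \var$. Since $D_p$ is first order elliptic with principal symbol $\sigma(D_p)(\xi)=|\xi|\,\Id_{E_p}$, the operator $\exp(iwD_p)$ has propagation speed one, so its Schwartz kernel vanishes outside $\{(x,x'):d^M(x,x')\leqslant |w|\}$ (see e.g.\ the standard argument of \cite[Appendix D]{MM07}). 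Applying this to each $w\in[-\var,\var]$ shows that $\boldsymbol{F}_{u/p}(\sqrt{u/p}\,D_p)(x,x')$ vanishes for $d^M(x,x')>\var$ and depends only on $D_p|_{B^M(x,\var)}$.

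For statement (1), I would first obtain scalar estimates on $\boldsymbol{G}_{u/p}$. On the support of $1-f(\sqrt{u/p}\,v)$ we have $|v|\geqslant \var\sqrt{p/u}/2$, hence $v^2/2 \geqslant \var^2 p/(8u)$. Splitting $e^{-v^2/2}=e^{-v^2/4}e^{-v^2/4}$ and using $e^{-v^2/4}\leqslant \exp(-\var^2 p/(32u))$ on this support, one writes
\[
\boldsymbol{G}_{u/p}(a) = \exp\!\Big(\!-\frac{\var^2 p}{32 u}\Big)\,
h_{u/p}(a),
\]
where $h_{u/p}$ is the Fourier transform of a function uniformly bounded in $\mathcal{S}(\R)$ (all seminorms are controlled uniformly in $u\in(0,T)$ and $p\in\N^*$). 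Consequently, for every $N\in\N$ one has $|a|^{2N}|\boldsymbol{G}_{u/p}(a)|\leqslant C_N \exp(-\var^2 p/(32u))$ uniformly in $a\in\R$.

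The final step is to upgrade this into a $\cC^m$-estimate on the smooth kernel, which is where the main technicality lies. By functional calculus the scalar bound gives $\|(1+D_p^2)^N\boldsymbol{G}_{u/p}(\sqrt{u/p}\,D_p)\|_{L^2\to L^2}\ll \exp(-\var^2 p/(32u))$. To pass from $L^2$ operator bounds to pointwise $\cC^m$ bounds on the kernel I would use the Sobolev embedding together with elliptic regularity for $D_p^2$ in the form
\[
\|s\|_{\cC^m(M,E_p)} \leqslant C\, p^{N(m)}\, \|(1+D_p^2)^{N(m)} s\|_{L^2}
\]
for suitable $N(m)$, applied in both the $x$ and the $x'$ variables; this is standard, the point being that every derivative produced by the connections $\nabla^L,\nabla^E,\nabla^{B,\Lambda^{0,\bullet}}$ can be estimated by $(1+D_p^2)^{1/2}$ modulo terms of order $p$, as in \cite[\S 1.6]{MM07}. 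The resulting losses are only polynomial in $p$, hence absorbed by the super-exponential factor $\exp(-\var^2 p/(32u))$ for $u\in(0,T)$, producing the claimed bound after possibly shrinking the constant $1/32$ or enlarging $C$.

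The main obstacle is precisely this last step: one must control the $p$-dependence of the Sobolev norms uniformly with respect to $u\in(0,T)$, so that the polynomial growth of the elliptic constants does not destroy the Gaussian smallness in $p$. This is handled by using the rescaled bundles and connections from \cite[\S 1.6.1]{MM07} and noting that the relevant constants in the elliptic estimates grow at most polynomially in $p$, while the scalar Gaussian factor decays super-exponentially. Once this is in place, both assertions of the proposition follow.
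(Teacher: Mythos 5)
Your proof follows the same two-pronged approach that the paper itself uses (citing \cite[Proposition 1.6.4]{MM07} and \cite[Theorem D.2.1]{MM07}): finite propagation speed of the wave group $\exp(iwD_p)$ for the localization statement about $\boldsymbol{F}_{u/p}$, and a scalar Schwartz-class estimate for $\boldsymbol{G}_{u/p}$ promoted to a $\cC^m$ kernel bound via spectral calculus combined with elliptic and Sobolev estimates, with the polynomial losses in $p/u$ absorbed by the Gaussian. The structure is correct and matches the intended argument.

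Two small refinements. First, your hedge about "possibly shrinking the constant $1/32$" is unnecessary and can be removed: on the support of $1-f(\sqrt{u/p}\,v)$ one has $|v|\geq\tfrac{\var}{2}\sqrt{p/u}$, hence $e^{-v^2/4}\leq\exp(-\var^2 p/(16u))$, which is strictly stronger than the $\exp(-\var^2 p/(32u))$ you wrote down. Using this stronger bound as the scalar input leaves exactly the margin needed to absorb every polynomial factor $(p/u)^{N}$ arising from the elliptic/Sobolev step (since $(p/u)^{N}\exp(-\var^2 p/(16u))\leq C_N\exp(-\var^2 p/(32u))$ uniformly for $p/u>0$), and you land precisely on the stated constant $\var^2/32$. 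Second, a minor inaccuracy in the finite propagation speed step: the principal symbol of the first-order operator $D_p$ is Clifford multiplication $c(\xi)$, not $|\xi|\Id_{E_p}$; it is $\sigma(D_p^2)(\xi)=|\xi|^2\Id_{E_p}$ that is scalar. Since $c(\xi)^2=|\xi|^2\Id$ the propagation speed is nevertheless equal to one, so the conclusion you draw is correct.
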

This follows from the proof of \cite[Proposition 1.6.4]{MM07},
in particular from \cite[(1.6.16)]{MM07} with $\zeta=1$,
since under our assumption any polynomial in $p, u^{-1}$
will be absorbed by the factor $\exp(-\frac{\var^2 p}{32 u})$.
The second assertion of follows by using
\eqref{lm4.20}, the finite propagation speed of the wave operator,
cf.\ \cite[Theorem D.2.1 and (D.2.17)]{MM07}.

Thus the problem on the asymptotic expansion of
$\exp\!\big(\!-\frac{u}{p} D_p^2\big) (x,x)$, for $0<u<T$
and $p\in\N$, is a local problem
and only depends on the restriction of $D_p$ to
$B^M(x,\var)$.

To analyze the local problem, we fix $x_0\in M$ and work on
$M_0:=\R^{2n}\simeq T_{x_0}M$.
From now on, we identify $B^{T_{x_0}M}(0,4\var)$
with $B^{M} (x_0,4\var)$ by the exponential map.
For $Z\in B^{T_{x_0}M}(0,4\var)$,
we identify
\[E_Z\cong E_{x_0}\,,\quad L_Z\cong L_{x_0}\,,\quad
\Lambda (T^{*(0,1)}_Z M)\cong \Lambda (T^{*(0,1)}_{x_0}M)\,,\]
by parallel transport with respect
to the connections $\nabla ^E$, $\nabla ^L$,
$\nabla^{B,\Lambda^{0,\bullet}}$ along the curve
$[0,1]\ni u\mapsto uZ$.
Thus on $B^{M} (x_0,4\var)$, we have the following identifications
of Hermitian bundles
\[
\begin{split}
(E, h^E)\cong (E_{x_0}, h^{E_{x_0}})\,,&\: (L, h^L)
\cong (L_{x_0}, h^{L_{x_0}})\,,\:
(\Lambda (T^{*(0,1)} M), h^{\Lambda^{0,\bullet}})\cong
(\Lambda (T^{*(0,1)}_{x_0}M), h^{\Lambda^{0,\bullet}_{x_0}})\\
&(E_p,h_p)\cong(E_{p,x_0}, h^{E_{p,x_0}})\,,
\end{split}
\]
where the bundles on the right-hand side are trivial Hermitian
bundles.

Let $\Gamma ^E, \Gamma ^L, \Gamma ^{B, \Lambda ^{0,\bullet}}$
be the corresponding connection forms of $\nabla^E$, $\nabla^L $
and $\nabla^{B, \Lambda ^{0,\bullet}}$ on $B^{M} (x_0,4\var)$.
Then $\Gamma ^E, \Gamma ^L, \Gamma ^{B, \Lambda ^{0,\bullet}}$
are skew-adjoint with respect to $h^{E_{x_0}}$,
$h^{L_{x_0}}$, $h^{\Lambda^{0,\bullet}_{x_0}}$.

Let $\rho: \R\to [0,1]$ be a smooth even function such that
\begin{align}\label{alm4.19}
\rho (v)=1  \  \  {\rm if} \  \  |v|<2;
\quad \rho (v)=0 \   \   {\rm if} \  |v|>4.
\end{align}
 Denote by  $\nabla_U$ the ordinary differentiation
 operator on $T_{x_0}M$ in the direction $U$.
{}From the above discussion,
\be\label{alm4.21}
\nabla^{E_{p, x_0}}= \nabla + \rho\!
\left(\tfrac{1}{\var}|Z|\right)\!\Big(p\, \Gamma ^L + \Gamma ^E
+\Gamma ^{B, \Lambda ^{0,\bullet}}\Big)\!(Z),
\ee
 defines a Hermitian connection on $(E_{p,x_0}, h^{E_{p,x_0}})$
on $\R^{2n} \simeq T_{x_0}M$
where the identification is given by
\begin{equation}\label{alm4.22}
\R^{2n}\ni(Z_1,\ldots, Z_{2n}) \longmapsto \sum_i
Z_i e_i\in T_{x_0}M.
\end{equation}
Here $\{e_{2j-1}= \frac{1}{\sqrt{2}}(w_{j}+\ov{w}_{j}),
e_{2j}= \frac{\sqrt{-1}}{\sqrt{2}}(w_{j}-\ov{w}_{j})\}_j$
is an orthonormal basis of $T_{x_0}M$.

Let $g^{TM_0}$ be a metric on $M_0:=\R^{2n}$ which
coincides with $g^{TM}$ on  $B^{T_{x_0}M} (0,2\var)$, and
$g^{T_{x_0}M}$ outside $B^{T_{x_0}M} (0,4\var)$.
Let $dv_{M_0}$ be the Riemannian volume form of
$(M_0, g^{TM_0})$.
Let $\Delta^{E_{p, x_0}}$ be the Bochner Laplacian associated to
$\nabla^{E_{p, x_0}}$ and $g^{TM_0}$ on $M_0$.
Set
\begin{equation}\label{alm4.23}
{\boldsymbol L}_{p,x_0}  = \Delta^{E_{p, x_0}}
- p\,\rho\!\left(\tfrac{1}{\var}|Z|\right)\!(2 \om_{d, Z} +\tau_{Z})
- \rho\!\left(\tfrac{1}{\var}|Z|\right)\! \Phi_{E,Z}.
\end{equation}
Then ${\boldsymbol L}_{p,x_0}$ is a self--adjoint operator
with respect to the $L^2$ scalar product induced by
$h^{E_{p,x_0}}$, $g^{TM_0}$ on $M_0$.
Moreover, ${\boldsymbol L}_{p,x_0}$ coincides with
$D^2_p$ on $B^{T_{x_{0}}M} (0,2\var)$.
By using \eqref{alm4.21a} we obtain the analogue of
Proposition \ref{lmt4.4} for
$\sqrt{\frac{u}{p}{\boldsymbol L}_{p,x_0}}$\,.
Thus by using the finite propagation speed for the wave operator
we get
\begin{equation}\label{alm4.24}
\left|\exp\!\Big(\!\!-\frac{u}{2p}D^2_p\Big)(x_0,x_0)
-\exp\!\Big(\!\!-\frac{u}{2p}{\boldsymbol L}_{p,x_0}\Big)(0,0)
\right|\leqslant C\exp\!\Big(\!\!-\frac{\varepsilon^2p}{32u}\Big)\,.
\end{equation}
Let $dv_{TM}$ be the Riemannian volume form on
$(T_{x_0}M, g^{T_{x_0}M})$.
Let $\kappa (Z)$ be the smooth positive function defined by
the equation
\be\label{alm4.26}
dv_{M_0}(Z) = \kappa (Z) dv_{TM}(Z),
\ee
with $k(0)=1$.

Set $\bE_{x_0}:=(\Lambda (T^{*(0,1)}M)\otimes E)_{x_0}$.
For  $s \in \cC^{\infty}(\R^{2n}, \bE_{x_0})$, $Z\in \R^{2n}$ and
$t=\frac{\sqrt{u}}{\sqrt{p}}$\,,
 set\index{$L^t_2$}
\begin{align}\label{lm4.28}
\begin{split}
&(S_{t} s ) (Z) =s (Z/t), \\
&\nabla_{t,u}=  S_t^{-1}
t \kappa^{1/2}\nabla ^{E_{p,x_0}} \kappa^{-1/2} S_t, \\
&  \cL^{t,u}_2= S_t^{-1} \kappa^{1/2} t^2 {\boldsymbol L}_{p,x_0}
\kappa^{-1/2} S_t.
\end{split}
\end{align}
Note that in \cite[(1.6.27)]{MM07} we used the scaling parameter
$t=\frac{1}{\sqrt{p}}$\,.
In the present situation we wish to obtain an expansion in the
variable $\frac{p}{u}$, so we need to
rescale the coordinates by setting $t=\frac{\sqrt{u}}{\sqrt{p}}$\,.
Put
\begin{align}\label{lm4.29}
\begin{split}
&\nabla_{0, u,\bullet}= \nabla_\bullet
+ \tfrac{u}{2} R^L_{x_0}(Z,\cdot),\\
&\cL^{0,u}_2= - \sum_i (\nabla_{0,u,e_i})^2
- 2u \om_{d,x_0} - u\tau_{x_0}\,.
\end{split}
\end{align}
Then we have the following analogue of
\cite[Theorem 4.1.7]{MM07}.
\begin{thm} \label{bkt2.5} There exist polynomials $\mA_{i,j,r}$
{\rm(}resp.\ $\mB_{i,r}$, $\mC_{r}${\rm)} in the variables $Z$
and in $u$,
where $r\in \N, i,j\in \{1,\cdots, 2n\}$,
with the following properties:

$\bullet$ their coefficients are polynomials in
$R^{TM}$  {\rm(}resp.\ $R^{TM}$, $R^{B,\Lambda^{0,\bullet}}$,
   $R^{E}$, $R^{\det}$, $d\Theta$, $R^L${\rm)}
and their derivatives at $x_0$ up to order $r-2$
{\rm(}resp.\ $r-2$, $r-2$,
 $r-2$, $r-2$, $r-1$, $r${\rm)}\,,

$\bullet$ $\mA_{i,j,r}$ is a homogeneous polynomial in $Z$ of
degree $r$
and does not depend on $u$,
the degree in $Z$ of $\mB_{i,r}$ is $\leqslant r+1$
{\rm(}resp.\ the degree of $\mC_{r}$ in $Z$ is
$\leqslant r+2${\rm)}, and has the same parity
as $r-1$ {\rm(}resp.\ $r${\rm)}\,,
the degree in $u$ of $\mB_{i,r}$ is $\leqslant 1$,
and the degree in $u$ of $\mC_{r}$ is $\leqslant 2$,

$\bullet$ if we denote by
\begin{align}\label{bk2.22}
\mO_{u,r} =  \mA_{i,j,r}\nabla_{e_i}\nabla_{e_j}
+ \mB_{i,r}(u)\nabla_{e_i}+ \mC_{r}(u),
\end{align}
 then
\begin{align}\label{bk2.23}
\cL^{t,u}_2=  \cL^{0,u}_2+ \sum_{r=1}^m t^r \mO_{u,r}
+ \cO(t^{m+1}),
\end{align}
and there exists $m'\in \N$ such that for any $k\in \N$,
$t\leqslant 1$, $0<u<T$,
the derivatives of order $\leqslant k$ of the coefficients of
the operator
 $\cO(t^{m+1})$ are dominated by $C t^{m+1} (1+|Z|)^{m'}$.
\end{thm}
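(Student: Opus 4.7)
The strategy is a direct Taylor expansion in $t$ of all the rescaled geometric data, exploiting the identity $t^2p=u$ to trade every power of $p$ for a power of $u/t^2$. Starting from \eqref{lm4.28} and \eqref{alm4.21}, conjugation by $S_t$ turns covariant derivatives into
\[
tS_t^{-1}\nabla^{E_{p,x_0}}_{e_i}S_t = \partial_{e_i} + tA_{e_i}(tZ),
\qquad A = p\Gamma^L + \Gamma^E + \Gamma^{B,\Lambda^{0,\bullet}}.
\]
Because the trivialization is obtained by parallel transport along radial lines, each $\Gamma^{\bullet}$ vanishes at $Z=0$ and admits a Taylor series whose $k$-th term is homogeneous of degree $k$ in $Z$, with coefficients polynomial in $R^L,R^E,R^{B,\Lambda^{0,\bullet}}$ and their covariant derivatives at $x_0$. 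Substituting $p=u/t^2$ gives
\[
tA_{e_i}(tZ) = u\sum_{k\geq 0} t^{k}\,p^{L}_{k+1}(Z,e_i) + \sum_{k\geq 1} t^{k+1}(p^{E}_{k}+p^{B}_{k})(Z,e_i),
\]
with $p^L_{k+1}$ homogeneous of degree $k+1$ in $Z$ and $p^L_1(Z,e_i)=\tfrac12 R^L_{x_0}(Z,e_i)$. This identifies the $t=0$ limit of $\nabla_{t,u,e_i}$ with $\nabla_{0,u,e_i}$.

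For the Bochner Laplacian $\Delta^{E_{p,x_0}} = -g^{ij}(\nabla_{e_i}\nabla_{e_j}-\Gamma^{TM,k}_{ij}\nabla_{e_k})$, the conjugation by $\kappa^{\pm 1/2}$ followed by $S_t$ produces smooth Taylor series in $t$ with polynomial-in-$Z$ coefficients obtained from $g^{ij}(tZ)$, $\Gamma^{TM,k}_{ij}(tZ)$ and $\kappa(tZ)^{\pm 1/2}$. A short computation shows that the zeroth-order coefficient in $t$ of $S_t^{-1}\kappa^{1/2}t^2\Delta^{E_{p,x_0}}\kappa^{-1/2}S_t$ equals $-\sum_i(\nabla_{0,u,e_i})^2$. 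The potential terms in \eqref{alm4.23} rescale as
\[
-t^2 p\,\rho(2\omega_d+\tau)(tZ) - t^2\rho\,\Phi_E(tZ) = -u(2\omega_d+\tau)(tZ) - t^2\Phi_E(tZ),
\]
and since $\rho(|tZ|/\varepsilon)=1$ on any fixed compact set for $t$ small, Taylor expansion produces the missing pieces $-2u\omega_{d,x_0}-u\tau_{x_0}$ of $\cL^{0,u}_2$ and higher-order contributions to $\mC_r(u)$. Together this proves \eqref{bk2.23} with leading term $\cL^{0,u}_2$.

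The structural claims on $\mA_{i,j,r},\mB_{i,r},\mC_r$ then follow by collecting powers of $t$. The coefficient $\mA_{i,j,r}$ of $\nabla_{e_i}\nabla_{e_j}$ comes solely from the expansion of $g^{ij}(tZ)-\delta_{ij}$, a purely Riemannian quantity: homogeneous of degree $r$ in $Z$ and independent of $u$. The first-order coefficient $\mB_{i,r}$ combines the Christoffel contribution $\Gamma^{TM,k}_{ij}(tZ)$ (no $u$) with the cross terms $2g^{ij}(tZ)\,tA_{e_j}(tZ)\,\partial_{e_i}$: the $p\Gamma^L$ part contributes exactly one power of $u$ at $Z$-degree $\leq r+1$ and parity $r-1$ (from $p^L_{k+1}$), whereas the $\Gamma^E+\Gamma^B$ part contributes no $u$. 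The multiplication coefficient $\mC_r$ picks up (i) $(tA_{e_i}(tZ))^2$, producing $u^2$ terms of $Z$-degree $\leq r+2$ and parity $r$, (ii) cross derivatives $\partial\cdot(tA)$ and $(tA)\cdot\partial$ plus divergence-type corrections, producing $u^1$ terms, and (iii) the potential expansion $-u(2\omega_d+\tau)(tZ)-t^2\Phi_E(tZ)$, producing $u^1$ and $u^0$ terms; so $\deg_u\mC_r\leq 2$ with the stated $Z$-degree and parity.

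The main obstacle is bookkeeping rather than conceptual: the parity and $u$-degree statements must be tracked through every product. The key observation is that $p$ enters \emph{only} in the combination $u/t^2$ paired with the factor $t$ from the rescaling of $\nabla$, so each power of $u$ comes accompanied by a factor $p^L_{k+1}$ of $Z$-degree $k+1$; hence $j$ factors of $u$ at combined $t$-order $r$ carry $Z$-degree of the correct parity. The uniform polynomial bound on the remainder $\cO(t^{m+1})$ follows from Taylor's theorem with integral remainder applied on the compact set where $\rho\neq 1$ is controlled, and from the fact that $0<u<T$ keeps all $u$-factors uniformly bounded, giving the claimed $\cC^k$-estimate dominated by $Ct^{m+1}(1+|Z|)^{m'}$.
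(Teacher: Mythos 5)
Your proposal is correct and follows essentially the same route as the paper: the paper writes the rescaled operator $\cL^{t,u}_2$ explicitly in \eqref{bk2.26}, notes that $pt = u/t$, and states that the theorem follows by Taylor-expanding this expression, deferring the bookkeeping to the model computation in \cite[Theorem 4.1.7]{MM07}. Your proof fills in precisely those details — the radial-gauge Taylor expansion of the connection forms, the substitution $p=u/t^2$, and the degree/parity/$u$-degree accounting — matching the paper's strategy.
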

Set $g_{ij}(Z)= g^{TM_0}(e_i,e_j)(Z) =  \langle e_i,e_j\rangle_Z$
and let $(g^{ij}(Z))$ be the inverse of the matrix $(g_{ij}(Z))$.
We observe that $pt = \frac{u}{t}$, thus the analogue
of \cite[(4.1.34)]{MM07} reads
\begin{equation}\label{bk2.26}
\begin{split}
\nabla_{t,u, \bullet} =&\kappa^{1/2}(tZ)
\Big(\nabla_\bullet+  t\, \Gamma ^{A_0}_{tZ}
+  \frac{u}{t}\, \Gamma ^{L_0}_{tZ}
+ t\, \Gamma ^{E_0}_{tZ}\Big)\kappa^{-1/2}(tZ),\\
\cL^{t,u}_2=& - g^{ij}(tZ) \Big(\nabla_{t,u, e_i} \nabla_{t,u, e_j}
- t\, \Gamma_{ij}^k(tZ) \nabla_{t,u, e_k}\Big)\\
&- 2 u \,\om_{0, d, tZ} - u\, \tau_{0, tZ}+ t^2\Phi_{E_0,tZ}.
\end{split}
\end{equation}
Comparing with \cite[(4.1.34)]{MM07},
the term of $\nabla_{t,u, \bullet}$ involving $u$ is
$\frac{u}{t} \Gamma ^{L_0}_{tZ}$
instead of $\frac{1}{t} \Gamma ^{L_0}_{tZ}$ therein.

Theorem \ref{bkt2.5} follows by taking  the Taylor expansion of
\eqref{bk2.26}.
Using Theorem \ref{bkt2.5} we see that
\cite[Theorems 1.6.7-1.6.10]{MM07}
 (or more precisely \cite[Theorems 4.1.9-4.1.14]{MM07} with the
 contour $\delta\cup \Delta$ replaced by the contour
 $\Gamma$ from \cite[Theorems 1.6.7-1.6.10]{MM07})
 hold  uniformly for $0<u<T$.

 Thus we get the following analogue of \cite[Theorem 4.2.8]{MM07}
 in normal coordinates.
 \begin{thm} \label{bkt3.6}  There exists $C''>0$ such that
for any $k,m,m'\in \N$, there exists
$C>0$  such that if
$t\in ]0,1],  0<u<T$, $Z,Z'\in T_{x_0}M$,
 \begin{align}\label{bk3.28}
\begin{split}
&\sup_{|\alpha|+|\alpha'|\leqslant m}
\left|\frac{\partial^{|\alpha|+|\alpha'|}}{\partial Z^{\alpha}
{\partial Z'}^{\alpha'}}\Big(\!\exp\!\big(\!-\cL^{t,u}_2\big)\!
- \sum_{r=0}^k J_{r,u} t^r\Big) (Z,Z')\right|_{\cC^{m'}(X)} \\
&\hspace*{5mm}    \leqslant C  t^{k+1}  (1+|Z|+|Z'|)^{M_{k+1,m,m'}}
\exp (- C'' |Z-Z'|^2).
\end{split}
\end{align}
\end{thm}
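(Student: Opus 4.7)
The plan is to follow the resolvent/Volterra-series strategy of \cite[Theorem 4.2.8]{MM07}, modified so that all bounds are uniform in the auxiliary parameter $u\in(0,T)$. The starting point is the contour representation
\[
\exp(-\cL^{t,u}_2)=\frac{1}{2\pi i}\int_\Gamma e^{-\lambda}(\lambda-\cL^{t,u}_2)^{-1}\,d\lambda,
\]
where $\Gamma\subset\C$ is a fixed contour enclosing the spectrum of every $\cL^{t,u}_2$ with $t\in(0,1]$, $u\in(0,T)$. Such a $\Gamma$ exists because $\cL^{0,u}_2$ is a shifted generalized harmonic oscillator whose bottom of spectrum is bounded below by $-Cu\ge -CT$ (the perturbation $-2u\om_{d,x_0}-u\tau_{x_0}$ being a bounded endomorphism of $\bE_{x_0}$), and because the difference $\cL^{t,u}_2-\cL^{0,u}_2=\sum_{r\ge 1}t^r\mO_{u,r}+\cO(t^{m+1})$ from Theorem \ref{bkt2.5} is a relatively small perturbation with coefficients polynomial in $Z$ of controlled degree and in $u$ of degree $\le 2$.

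Next I would expand the resolvent via the Volterra-type identity
\[
(\lambda-\cL^{t,u}_2)^{-1}=\sum_{j=0}^{k}(\lambda-\cL^{0,u}_2)^{-1}\Bigl(\sum_{r\ge 1}t^r\mO_{u,r}(\lambda-\cL^{0,u}_2)^{-1}\Bigr)^{\!j}+\mathrm{Rem}_{k+1}(t,u,\lambda),
\]
collect powers of $t$, and \emph{define} $J_{r,u}(Z,Z')$, for $0\le r\le k$, as the kernel of $\frac{1}{2\pi i}\int_\Gamma e^{-\lambda}\,[\,\text{coefficient of }t^r\,]\,d\lambda$. The terms with $j\ge k+1$, together with the remainders produced by the $\cO(t^{m+1})$ part of Theorem \ref{bkt2.5}, form the global remainder of total order $t^{k+1}$.

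The core analytic step is converting these operator identities into pointwise bounds on kernels with the stated polynomial growth and Gaussian decay. I would use the weighted Sobolev norms adapted to $\cL^{0,u}_2$ introduced in \cite[\S1.6, \S4.1]{MM07}: elliptic estimates of the form $\|(\lambda-\cL^{t,u}_2)^{-1}\|_{H^q\to H^{q+2}}\le C$ uniformly in $t,u,\lambda\in\Gamma$. Commuting the resolvent through the weights $(1+|Z|)^N$ produces bounded commutators (because the coefficients of $\mO_{u,r}$ are polynomial in $Z$), and after a finite number of such commutations one obtains the polynomial factor $(1+|Z|+|Z'|)^{M_{k+1,m,m'}}$. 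Sobolev embedding then yields pointwise $\cC^{m'}$-estimates, the $\cC^{m'}$-derivatives in $x_0$ being controlled by the smooth dependence of all coefficients on $x_0$ with uniform bounds. The Gaussian off-diagonal factor $e^{-C''|Z-Z'|^2}$ is obtained by the Bismut--Lebeau conjugation: for a unit vector $\xi\in\R^{2n}$ and a parameter $c>0$ one considers $e^{-c\langle Z,\xi\rangle}\cL^{t,u}_2 e^{c\langle Z,\xi\rangle}$, whose principal symbol is unchanged and whose lower-order corrections remain uniformly bounded in $t,u$; optimizing $c$ proportionally to $|Z-Z'|$ yields the exponential bound.

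The main obstacle is precisely the simultaneous uniformity: in the original MM07 setup $u$ is fixed and one lets $t=1/\sqrt{p}\to 0$, whereas here $u$ ranges over $(0,T)$ and $t\in(0,1]$ varies independently. The saving feature is Theorem \ref{bkt2.5}: $\mO_{u,r}$ is polynomial in $u$ of degree $\le 2$, so on the bounded interval $(0,T)$ every coefficient, derivative and the remainder term $\cO(t^{m+1})$ are dominated by constants depending only on $T$. The contour $\Gamma$ and the conjugation parameter $c$ can likewise be chosen independent of $u$. With these uniform versions of the estimates in place, the full machinery of \cite[Theorems 4.1.9--4.1.14]{MM07} applies verbatim, and \eqref{bk3.28} follows by summing the Volterra expansion and estimating the remainder.
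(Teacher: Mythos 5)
Your proposal is correct and follows essentially the same route as the paper, which proves Theorem \ref{bkt3.6} by invoking the resolvent/Volterra machinery of \cite[Theorems 4.1.9--4.1.14]{MM07} (with the heat-kernel contour $\Gamma$ of \cite[Theorems 1.6.7--1.6.10]{MM07}), made uniform in $u\in(0,T)$ precisely because Theorem \ref{bkt2.5} bounds the $u$-degree of the coefficients. You have filled in the details of that machinery --- contour representation, Neumann expansion in powers of $t$, weighted Sobolev estimates and Bismut--Lebeau conjugation for the Gaussian factor --- and correctly identified the bounded-degree-in-$u$ structure as the reason all constants can be taken uniform; this matches the paper's intent.
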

Note that we use the operator $\cL^{t,u}_2$ and we rescale
the coordinates by the factor $t=\frac{\sqrt{u}}{\sqrt{p}}$\,,
thus the factor $u$ in the right-hand side of the
second equation of \cite[(4.2.30)]{MM07} is $1$ here.
 Moreover, we have (cf.\ also \cite[(1.6.61)]{MM07} )
 \begin{align}\label{eq:z.1}
J_{0,u}(Z,Z')= \exp\!\big({-\cL^{0,u}_{2}}\big)\!(Z,Z') .
\end{align}
We infer from \eqref{lm4.29} (compare \cite[(1.6.68)]{MM07}) that
\begin{align}\label{lm4.76}
&\exp\!\big({-\cL^{0,u}_{2}}\big)\!(0,0) =\frac{1}{(2\pi)^{n}}
\frac{\det (u \dot{R}^L_{x_0})\exp(2u \omega_{d,x_0})}
{\det (1-\exp(-2u\dot{R}^L_{x_0}))}\otimes \Id_{E}.
\end{align}
The analogue of \cite[(1.6.66),\,(4.2.37)]{MM07} is
 that  for $Z,Z'\in T_{x_0}M$,
\begin{align}\label{abk3.45}
&\exp\!\Big(\!\!-\frac{u}{p} {\boldsymbol L}_{p,x_0}\Big)\!(Z,Z')
= \left(\frac{p}{u}\right)^{\!n}
\exp\!\big(\!-\cL^{t,u}_2\big)\!\left(\!\frac Zt,\frac {Z'}t\right)
\kappa ^{-1/2}(Z)\kappa ^{-1/2}(Z')\,.
\end{align}
 By taking $Z=Z'=0$  in Theorem \ref{bkt3.6},
 and using \eqref{abk3.45}, we get
 the analogue of \cite[(4.2.39)]{MM07},
 \begin{align}\label{bk3.46}
\abs{\del{\frac{u}{p}}^{\!n} \!
\exp\!\del{-\frac{u}{p} D^{2}_p}\!(x_0,x_0)
-  \sum_{r=0}^k  J_{r,u}(0,0)
\left(\frac{p}{u}\right)^{\!-r/2}}_{\cC^{m'}(X)}
 \leqslant C \left(\frac{p}{u}\right)^{\!-\frac{k+1}{2}}.
\end{align}
Finally, by the same argument as in the proof of
\cite[(4.2.40)]{MM07},
we get for any $r\in \N$,
\begin{equation}\label{abk3.46}
J_{2r+1,u}(0,0)=0.
\end{equation}
Relations \eqref{eq:z.1}--\eqref{abk3.46} yield
Theorem \ref{ASYM} with $e_{\infty 0}(u,x_{0})$ given by \eqref{eq:z.3}.

\providecommand{\href}[2]{#2}

\end{document}